\theoremstyle{definition}
 \newtheorem{dfn}{Definition}[section]
 \newtheorem{remark}[dfn]{Remark}
\theoremstyle{plain}
 \newtheorem{thm}[dfn]{Theorem}
 \newtheorem{lem}[dfn]{Lemma}
 \newtheorem{assumption}[dfn]{Assumption}
\numberwithin{equation}{section}
\newcommand{\bD}{{\bold D}}
\newcommand{\bF}{{F}}
\newcommand{\bH}{{\bold H}}
\newcommand{\bI}{{\mathbb I}}
\newcommand{\bK}{{\bold K}}
\newcommand{\bL}{{\bold L}}
\newcommand{\bS}{{\bold S}}
\newcommand{\bT}{{\bold T}}
\newcommand{\DV}{{\rm Div}\,}
\newcommand{\dv}{\, {\rm div}\,}
\newcommand{\BR}{{\Bbb R}}
\newcommand{\BC}{{\Bbb C}}
\newcommand{\BN}{{\Bbb N}}
\newcommand{\CA}{{\mathcal A}}
\newcommand{\CB}{{\mathcal B}}
\newcommand{\CD}{{\mathcal D}}
\newcommand{\CF}{{\mathcal F}}
\newcommand{\CI}{{\mathcal I}}
\newcommand{\CL}{{\mathcal L}}
\newcommand{\CN}{{\mathcal N}}
\newcommand{\CR}{{\mathcal R}}
\newcommand{\CS}{{\mathcal S}}
\newcommand{\CT}{{\mathcal T}}
\newcommand{\ba}{{\bold a}}
\newcommand{\bb}{{\bold b}}
\newcommand{\bff}{{\bold f}}
\newcommand{\bv}{{\bold v}}
\newcommand{\bu}{{\bold u}}
\newcommand{\bw}{{\bold w}}
\newcommand{\bg}{{\bold g}}
\newcommand{\pd}{\partial}
\newcommand{\R}{\mathbb{R}}
\newcommand{\N}{\mathbb{N}}
\newcommand{\C}{\mathbb{C}}
\newenvironment{cases*}%
{%
\left\{
\begin{array}{@{}r@{\;}l@{\quad}l@{}}
}%
{\end{array}\right.}
\title{The global well-posedness for the compressible 
fluid model of Korteweg type}
\author{
Miho MURATA
\thanks{Department of Mathematics, 
Kanagawa University, \endgraf
Rokkakubashi 3-27-1, Kanagawa-ku, Yokohama-shi, Kanagawa, 
221-8686 Japan.
\endgraf
e-mail address: m-murata@kanagawa-u.ac.jp
\endgraf
Partially supported by JSPS
Grant-in-Aid for Young Scientists (B) \# 17K14225}
\enskip and \enskip
Yoshihiro SHIBATA
\thanks{Department of Mathematics,  
Waseda University, \endgraf
Ohkubo 3-4-1, Shinjuku-ku, Tokyo 169-8555, Japan. \endgraf
e-mail address: yshibata@waseda.jp
\endgraf
Adjunct faculty member in the Department of Mechanical Engineering and
Materials Science, University of Pittsburgh.
\endgraf
Partially supporte by JSPS Grant-in-aid for Scientific Research (A) 17H0109
and 
Top Global University Project}
}
\date{}
\begin{document}
\maketitle

\begin{abstract}
In this paper, we consider the compressible 
fluid model of Korteweg type
which can be used as a phase transition model.
It is shown that the system admits a unique, global strong solution 
for small initial data in $\BR^N$, $N \geq 3$.
In this study, the main tools are the maximal $L_p$-$L_q$ regularity 
and $L_p$-$L_q$ decay properties of solutions to the linearized equations. 
\end{abstract}

\section{Introduction}
We consider the following compressible viscous 
fluid model of Korteweg type in the $N$ dimensional 
Euclidean space $\BR^N$, $N \geq 3$. 

\begin{equation}\label{nsk}
\begin{cases*}
&\pd_t \rho + \dv (\rho \bu) = 0 & \quad\text{in $\R^N$ for $t \in (0, T)$}, \\
&\rho (\pd_t \bu + \bu \cdot \nabla \bu)  
- \DV \bT + \nabla P(\rho) =0 & \quad\text{in $\R^N$ for $t \in (0, T)$}, \\
&(\rho, \bu)|_{t=0} = (\rho_* + \rho_0, \bu_0)& \quad\text{in $\R^N$},
\end{cases*}
\end{equation}
where $\pd_t = \pd/\pd t$, $t$ is the time variable, 
$\rho = \rho(x, t)$, $x=(x_1, \ldots, x_N) \in \BR^N$
and  
$\bu = \bu(x, t) = (u_1(x, t), \ldots, u_N(x, t))$
are  respective unknown density field and velocity field, 
$P(\rho)$ is the pressure field 
satisfying a $C^\infty$ function defined on
$\rho > 0$, 
where $\rho_*$ is a positive constant.
Moreover, $\bT = \bS (\bu) + \bK (\rho)$
is the stress tensor, where $\bS(\bu)$ and 
$\bK(\rho)$ are respective the viscous stress tensor and 
Korteweg stress tensor given by 
\begin{align*}
\bS (\bu) &= \mu_* \bD(\bu) + 
(\nu_* - \mu_*) \dv \bu \bI, \\
\bK (\rho) &= \frac{\kappa_*}{2} (\Delta \rho^2 -  |\nabla \rho|^2 )\bI 
- \kappa_* \nabla \rho \otimes \nabla \rho,
\end{align*} 
Here, 
$\bD(\bu)$ denotes 
the deformation tensor whose $(j, k)$ components are
 $D_{jk}(\bu) = \pd_ju_k
+ \pd_ku_j$ with $\pd_j
= \pd/\pd x_j$.
For any vector of functions $\bv = (v_1, \ldots, v_N)$, 
we set $\dv \bv = \sum_{j=1}^N\pd_jv_j$,
and also for any $N\times N$ matrix field $\bL$ with $(j,k)^{\rm th}$ components $L_{jk}$, 
the quantity $\DV \bL$ is an 
$N$-vector with $j^{\rm th}$ component $\sum_{k=1}^N\pd_kL_{jk}$.
$\bI$ is the $N\times N$ identity matrix
and
$\ba \otimes \bb$ denotes an $N\times N$ matrix with $(j, k)^{\rm th}$
component $a_j b_k$
for any two $N$-vectors $\ba = (a_1, \dots, a_N)$ and $\bb = (b_1, \dots, b_N)$.  
We assume that the viscosity coefficients $\mu_*$, $\nu_*$,
 the capillary coefficient $\kappa_*$, and the mass density $\rho_*$ of the
 reference body  satisfy
the conditions:

\begin{equation}\label{condi} 
\mu_* > 0, \quad \mu_* + \nu_*>0, \quad \kappa_* > 0, \enskip
 P'(\rho_*) > 0,  
\quad \text{and} \quad
\frac{1}{4} \left(\frac{\mu_* + \nu_*}{\rho_*}\right)^2 \neq \rho_* \kappa_*.
\end{equation}
Under the condition \eqref{condi}, we can prove  the suitable decay
properties of solutions to the linearized equations in addition to
the maximal $L_p$-$L_q$ regularity, which enable us to prove
the global wellposedness, cf. Theorem \ref{semi}, below.
The system \eqref{nsk} governs 
the motion of the compressible fluids with 
capillarity effects, which was proposed by 
Korteweg \cite{K} as a diffuse interface model for liquid-vapor flows
based on Van der Waals's approach \cite{Wa}
and  derived rigorously by Dunn and Serrin in \cite{DS}.
There are many mathematical results on Korteweg model.
Bresch, Desjardins, and Lin \cite{BDL} 
proved the existence of global weak solution,
 and then Haspot improved their result in \cite{H}.
Hattori and Li \cite{HL1, HL2} first showed
the local and global unique existence 
in Sobolev space. 
They assumed the initial data 
$(\rho_0, \bu_0)$ belong to $H^{s + 1} (\BR^N) \times H^s (\BR^N)^N$
$(s \geq [N/2] + 3)$.
Hou, Peng, and Zhu \cite{HPZ} improved the results \cite{HL1, HL2}
when the total energy is small. 
Wang and Tan \cite{WT}, 
Tan and Wang \cite{TW}, 
Tan, Wang, and Xu \cite{TWX}, and 
Tan and Zhang \cite{TZ} 
established the optimal decay rates of the global solutions
in Sobolev space.
Li \cite{L} and Chen and Zhao \cite{CZ} 
considerd Navier-Stokes-Korteweg system with external force.
Bian, Yeo, and Zhu \cite{BYZ} obtained 
the vanishing capillarity limit of the smooth solution.
In particular, we refer to the existence and uniqueness results 
in critical Besov space proved by Danchin and Desjardins in \cite{DD}.
Their initial data $(\rho_0, \bu_0)$ are assumed to belong to
$\dot{B}^{N/2}_{2,1}(\BR^N) \cap \dot{B}^{N/2-1}_{2,1}(\BR^N)
 \times \dot{B}^{N/2-1}_{2,1}(\BR^N)^N$.
It is not clear about the decay estimates for the solutions in \cite{DD}.
In this paper, we discuss the global existence and uniqueness
of the strong solutions for \eqref{nsk} 
in the maximal $L_p$-$L_q$ regularity class.
We also prove the decay estimates of the solutions to \eqref{nsk}.
We assume that
the initial data, $(\rho_0, \bu_0)$, belong to the following Besov space:
\[
D_{q, p}(\BR^N)
= B^{3-2/p}_{q, p}(\BR^N) \times B^{2(1-1/p)}_{q, p}(\BR^N)^N,
\]
where regularity of the initial data is independent of the dimension
comparing with \cite{DD}. 
In oder to establish the unique existence theorem of  global in time strong solutions
 in Sobolev space, 
we take the exponents $p$ large enough freely 
to guarantee  $L_p$ summability in time, 
because we can expect only polynomially in time
decay properties in unbounded domains. 
This is one of the important aspects of  
the maximal $L_p$-$L_q$ regularity approach to the mathematical 
study of the viscous fluid flows.
Since the Korteweg model was drived by using on Van der
Waals potential,
we also have to consider the cases where $P'(\rho_*) = 0$ and $P'(\rho_*) < 0$
unlike the Navier-Stokes-Fourier model.
We know the local wellposedness for thses two cases, but for the global
well-posedness, our approach does not work.  On this point,
we refer \cite{CK} and \cite{KT}.

Finally, 
we summarize several symbols and functional spaces used 
throughout the paper.
$\BN$, $\BR$ and $\BC$ denote the sets of 
all natural numbers, real numbers and complex numbers, respectively. 
We set $\BN_0=\BN \cup \{0\}$ and $\BR_+ = (0, \infty)$. 
Let $q'$ be the dual exponent of $q$
defined by $q' = q/(q-1)$
for $1 < q < \infty$. 
For any multi-index $\alpha = (\alpha_1, \ldots, \alpha_N) 
\in \BN_0^N$, we write $|\alpha|=\alpha_1+\cdots+\alpha_N$ 
and $\pd_x^\alpha=\pd_1^{\alpha_1} \cdots \pd_N^{\alpha_N}$ 
with $x = (x_1, \ldots, x_N)$. 
For scalar function $f$ and $N$-vector of functions $\bg$, we set
\begin{gather*}
\nabla f = (\pd_1f,\ldots,\pd_Nf),
\enskip \nabla \bg = (\pd_ig_j \mid i, j = 1,\ldots, N),\\
\nabla^2 f = \{\pd_i \pd_j f \mid i, j = 1,\ldots, N \},
\enskip \nabla^2 \bg = \{\pd_i \pd_j g_k \mid
i, j, k = 1,\ldots,N\},
\end{gather*} 
where $\pd_i = \pd/\pd x_i$. 
For scalar functions,  $f,g$,  and $N$-vectors of functions, 
$\bff$, $\bg$,  we set 
$(f, g)_{\BR^N} = \int_{\BR^N} f g\,dx$, and 
$(\bff,\bg)_{\BR^N} = \int_{\BR^N} \bff\cdot \bg\,dx$, respectively. 
For Banach spaces $X$ and $Y$, $\CL(X,Y)$ denotes the set of 
all bounded linear operators from $X$ into $Y$ and 
$\rm{Hol}\,(U, \CL(X,Y))$ 
 the set of all $\CL(X,Y)$ valued holomorphic 
functions defined on a domain $U$ in $\BC$. 
For any $1 \leq p, q \leq \infty$,
$L_q(\BR^N)$, $W_q^m(\BR^N)$ and $B^s_{q, p}(\BR^N)$ 
denote the usual Lebesgue space, Sobolev space and 
Besov space, 
while $\|\cdot\|_{L_q(\BR^N)}$, $\|\cdot\|_{W_q^m(\BR^N)}$ and 
$\|\cdot\|_{B^s_{q,p}(\BR^N)}$ 
denote their norms, respectively. We set $W^0_q(\BR^N) = L_q(\BR^N)$
and $W^s_q(\BR^N) = B^s_{q,q}(\BR^N)$. 
$C^\infty(\BR^N)$ denotes the set all $C^\infty$ functions defined on $\BR^N$. 
$L_p((a, b), X)$ and $W_p^m((a, b), X)$ 
denote the usual Lebesgue space and Sobolev space of 
$X$-valued function defined on an interval $(a,b)$, respectively.
The $d$-product space of $X$ is defined by 
$X^d=\{f=(f, \ldots, f_d) \mid f_i \in X \, (i=1,\ldots,d)\}$,
while its norm is denoted by 
$\|\cdot\|_X$ instead of $\|\cdot\|_{X^d}$ for the sake of 
simplicity. 
We set 
\begin{gather*}
W_q^{m,\ell}(\BR^N)=\{(f,\bg) \mid  f \in W_q^m(\BR^N),
\enskip \bg \in W_q^\ell(\BR^N)^N \}, \enskip 
\|(f, \bg)\|_{W^{m, \ell}_q(\BR^N)} = \|f\|_{W^m_q(\BR^N)}
+ \|\bg\|_{W^\ell_q(\BR^N)}.
\end{gather*}
Furthermore, we set
\begin{align*}
L_{p, \delta}(\BR_+, X) & = \{f (t) \in L_{p, {\rm loc}}
(\BR_+, X) \mid e^{-\delta t} f (t) \in L_p (\BR_+, X)\}, \\
W^1_{p, \delta}(\BR_+, X) & = \{f(t) \in
 L_{p, \delta}(\BR_+, X) \mid 
e^{-\delta t} \pd_t^j f(t) \in L_p(\BR_+, X)
\enskip (j=0, 1)\}
\end{align*}
for $1 < p < \infty$ and $\delta > 0$.
Let $\CF_x= \CF$ and $\CF^{-1}_\xi = \CF^{-1}$ 
denote the Fourier transform and 
the Fourier inverse transform, respectively, which are defined by 
 setting
$$\hat f (\xi)
= \CF_x[f](\xi) = \int_{\BR^N}e^{-ix\cdot\xi}f(x)\,dx, \quad
\CF^{-1}_\xi[g](x) = \frac{1}{(2\pi)^N}\int_{\BR^N}
e^{ix\cdot\xi}g(\xi)\,d\xi. 
$$
 The letter $C$ denotes generic constants and the constant 
$C_{a,b,\ldots}$ depends on $a,b,\ldots$. 
The values of constants $C$ and $C_{a,b,\ldots}$ 
may change from line to line. We use small boldface letters, e.g. $\bu$ to 
denote vector-valued functions and capital boldface letters, e.g. $\bH$
to denote matrix-valued functions, respectively. 
In order to state our main theorem, 
we set a solution space
and several norms:
\begin{align}
X_{p, q, t} &= \{(\theta, \bu) 
 \mid \theta \in L_p((0, t), W^3_q(\BR^N)) \cap W^1_p((0, t), W^1_q(\BR^N)) \nonumber \\
& \bu \in 
L_p((0, t), W^2_q(\BR^N)^N) \cap W^1_p((0, t), L_q(\BR^N)^N),
\quad
\rho_*/4 \leq \rho_* + \theta(t, x) \leq 4 \rho_*\}, \nonumber \\
[ U ]_{q, \ell, t}
&= \sup_{0 \leq s \leq t} <s>^\ell \|U (\cdot, s)\|_{L_q(\BR^N)}
\enskip (U = \theta, \bu, (\theta, \bu)), 
\nonumber \\ 
 [ \nabla U ]_{q, \ell, t}
&= \sup_{0 \leq s \leq t} <s>^\ell \|\nabla U (\cdot, s)
\|_{L_q(\BR^N)}
\enskip (U = \theta, (\theta, \bu)), \nonumber \\ 
\CN (\theta, \bu) (t)
&=\sum^1_{j=0} \sum^2_{i=1}
\{ [ (\nabla^j \theta, \nabla^j \bu) ]_{\infty, \frac{N}{q_1} + \frac{j}{2}, t}\nonumber \\
&+ [ (\nabla^j \theta, \nabla^j \bu) ]_{q_1, \frac{N}{2q_1} + \frac{j}{2}, t}
+ [ (\nabla^j \theta, \nabla^j \bu) ]_{q_2, \frac{N}{2q_2}+1 + \frac{j}{2}, t} \label{N} \\
&+ \|(<s>^{\ell_i}(\theta, \bu)\|_{L_p((0, t), W^{3, 2}_{q_i}(\BR^N))}
+ \|<s>^{\ell_i}(\pd_s \theta, \pd_s \bu)\|_{L_p((0, t), W^{1, 0}_{q_i}(\BR^N))} \}, \nonumber 
\end{align}
where  $<s> = (1 + s)$, $\ell_1 = N/2q_1 - \tau$, $\ell_2 = N/2q_2 + 1 - \tau$,
and $\tau$ is given in Theorem \ref{global}, below.

We now state our main theorem.

\begin{thm}\label{global} Assume that condition \eqref{condi} holds
and that $N \geq 3$.
Let $q_1$, $q_2$ and $p$ be numbers such that
\[
2<p<\infty,
\enskip
q_1<N<q_2,
\enskip
\frac{1}{q_1}=\frac{1}{q_2}+\frac{1}{N},
\enskip
\frac{2}{p}+\frac{N}{q_2}<1.
\]
Let $\tau$ be a number such that
\[
\frac{1}{p}<\tau <\frac{N}{q_2}+\frac{1}{p}.
\]
Then, there exists a small number $\epsilon>0$ such that for any initial data 
$(\rho_0, \bu_0) 
\in \cap^2_{i=1} D_{q_i, p} (\BR^N) \cap L_{q_1/2}(\BR^N)^{N + 1}$ with
\[
\CI :=
\sum^2_{i=1}\|(\rho_0, \bu_0)\|_{D_{q_i, p} (\BR^N)}
+\|(\rho_0, \bu_0)\|_{L_{q_1/2}(\BR^N)} < \epsilon,
\]
 problem \eqref{nsk} 
admits a solution $(\rho, \bu)$ with
$\rho = \rho_* + \theta$ and 
\[
(\theta, \bu)\in X_{p, q_2, \infty}
\]
satisfying the estimate
\[
\CN(\theta, \bu)(\infty)\leq L \epsilon
\]
with some constant $L$ independent of $\epsilon$.

\end{thm}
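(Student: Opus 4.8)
The plan is to treat \eqref{nsk} as a perturbation of its linearization about the constant equilibrium $(\rho_*,0)$ and to build the global solution by a contraction argument inside the ball $\{(\theta,\bu)\in X_{p,q_2,\infty}:\CN(\theta,\bu)(\infty)\le L\epsilon\}$. Writing $\rho=\rho_*+\theta$, the continuity equation becomes $\pd_t\theta+\rho_*\dv\bu=f(\theta,\bu)$ with $f(\theta,\bu)=-\dv(\theta\bu)$ in divergence form, while the momentum equation, after dividing by $\rho$ and Taylor-expanding $\DV(\bS(\bu)+\bK(\rho))$ and $\nabla P(\rho_*+\theta)$ to first order, becomes $\pd_t\bu-\rho_*^{-1}\DV\bS(\bu)-\kappa_*\nabla\Delta\theta+\rho_*^{-1}P'(\rho_*)\nabla\theta=\bg(\theta,\bu)$, where $\bg$ collects all contributions that are at least quadratic in $(\theta,\bu)$ and their derivatives: the convective term $\bu\cdot\nabla\bu$, the variable-coefficient remainders of the viscous and Korteweg stresses, and the nonlinear pressure remainder. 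The fifth condition in \eqref{condi}, $\tfrac14\big(\tfrac{\mu_*+\nu_*}{\rho_*}\big)^2\neq\rho_*\kappa_*$, is precisely what makes the characteristic roots of the symbol of this linear system simple, and hence underlies the $L_p$-$L_q$ decay estimates for the linearized problem recorded in Theorem \ref{semi}.

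Next I would introduce the iteration map $\Phi:(\theta,\bu)\mapsto(\Theta,\bv)$, where $(\Theta,\bv)$ solves the above linear system with right member $(f(\theta,\bu),\bg(\theta,\bu))$ and initial data $(\rho_0,\bu_0)$, and show that $\Phi$ maps the ball into itself and is a contraction (the density constraint $\rho_*/4\le\rho_*+\theta\le4\rho_*$ built into $X_{p,q_2,\infty}$ is then automatic, since the $L_\infty$-term of $\CN$ forces $\|\theta(\cdot,s)\|_{L_\infty}\le L\epsilon$). Self-mapping is where the two groups of quantities in \eqref{N} enter: the terms $\|\langle s\rangle^{\ell_i}(\theta,\bu)\|_{L_p((0,t),W^{3,2}_{q_i})}$ and $\|\langle s\rangle^{\ell_i}(\pd_s\theta,\pd_s\bu)\|_{L_p((0,t),W^{1,0}_{q_i})}$ are controlled by the maximal $L_p$-$L_q$ regularity of the linearized operator applied with the polynomial time weights $\langle s\rangle^{\ell_i}$, whereas the weighted pointwise-in-time quantities $[\nabla^jU]_{r,\sigma,t}$ are estimated from the Duhamel representation $(\Theta,\bv)(t)=e^{-t\CA}(\rho_0,\bu_0)+\int_0^t e^{-(t-s)\CA}(f,\bg)(s)\,ds$. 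For the first term the $L_p$-$L_q$ decay estimates produce exactly the exponents $N/q_1$, $N/(2q_1)$, $N/(2q_2)+1$ (with an extra $\tfrac12$ on a gradient) appearing in $\CN$, which is why $(\rho_0,\bu_0)\in L_{q_1/2}(\BR^N)^{N+1}$ is assumed; for the Duhamel integral one splits $\int_0^t=\int_0^{t/2}+\int_{t/2}^t$, using on $[0,t/2]$ that $\langle t-s\rangle^{-\sigma}\sim\langle t\rangle^{-\sigma}$ together with the weighted $L_p$-in-time control of $(f,\bg)$, and on $[t/2,t]$ the smoothing of $e^{-(t-s)\CA}$ acting on a nonlinearity that by then already carries strong decay.

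The core of the argument is the nonlinear estimate $\CN(\Phi(\theta,\bu))(t)\le C\,\CI+C\,\CN(\theta,\bu)(t)^2$ and the companion Lipschitz estimate for the difference of two inputs. Each constituent of $(f,\bg)$ is bounded in the $L_q$ and $L_p((0,t),L_q)$ norms demanded by the linear theory using Hölder's inequality, the embeddings $W^1_{q_1}(\BR^N)\hookrightarrow L_{q_2}(\BR^N)$ and $W^1_{q_2}(\BR^N)\hookrightarrow L_\infty(\BR^N)$ (the latter valid since $q_2>N$), and the relation $1/q_1=1/q_2+1/N$, which lets a product of two factors, each carrying its own $\langle s\rangle$-decay, be placed in $L_{q_1}$ or $L_{q_1/2}$ with decay exponents that add. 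The constraints $\tfrac2p+\tfrac N{q_2}<1$ and $\tfrac1p<\tau<\tfrac N{q_2}+\tfrac1p$ are used exactly to make the time integrals $\int_0^t\langle s\rangle^{-a}\,ds$ and $\int_0^t\langle t-s\rangle^{-b}\langle s\rangle^{-c}\,ds$ that arise converge with the correct power of $\langle t\rangle$ and to keep the weighted $L_p$-in-time norms finite. Once this is in place, fixing $L$ depending only on $C$ and then $\epsilon$ small makes $\Phi$ a contraction; its fixed point is the asserted global solution with $\CN(\theta,\bu)(\infty)\le L\epsilon$, and uniqueness in $X_{p,q_2,\infty}$ follows from the same Lipschitz estimate.

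The hard part will be running the two scales of estimates simultaneously — time-weighted maximal $L_p$-$L_q$ regularity and pointwise weighted decay — for a system whose linear part is of mixed parabolic/third-order dispersive type, so that every nonlinear term (convection, the several variable-coefficient stress contributions, the nonlinear pressure) must be shown to be genuinely quadratic \emph{with a decay rate strictly better than the Duhamel kernel can absorb}. The single most delicate point is the borderline time-integrability near $s=t$ in the Duhamel splitting, which is what forces the precise numerology relating $p$, $q_1$, $q_2$, $\tau$, and the dimension $N$ in the hypotheses.
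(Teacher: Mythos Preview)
Your plan matches the paper's proof essentially line for line: contraction on the ball $\{\CN\le L\epsilon\}$, Duhamel plus the $L_p$--$L_q$ decay of Theorem \ref{semi} for the pointwise weighted norms $[\,\cdot\,]_{r,\sigma,t}$, and time-weighted maximal $L_p$--$L_q$ regularity (via the ``shifted'' equations for $\langle s\rangle^{\ell_i}(\theta,\bu)$) for the $L_p$-in-time parts of $\CN$, all closed by the quadratic estimate $\CN(\Phi(\theta,\bu))\le C\CI+C\CN(\theta,\bu)^2$.

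One refinement worth flagging: the paper splits the Duhamel integral in three pieces, $\int_0^{t/2}+\int_{t/2}^{t-1}+\int_{t-1}^t$, not two. On $[0,t/2]$ and $[t/2,t-1]$ one uses the decay estimate with source exponent $q_1/2$; on $[t-1,t]$ the kernel is \emph{singular}, not smoothing, so one switches to the pair $(p,q)=(\infty,q_2)$ (resp.\ $(q_i,q_i)$) to make $(t-s)^{-N/(2q_2)-j/2}$ integrable, and this is exactly where $2/p+N/q_2<1$ is used. Your phrase ``the smoothing of $e^{-(t-s)\CA}$'' on $[t/2,t]$ is therefore slightly off; what actually saves the near-diagonal piece is choosing the right Lebesgue exponents so that the short-time singularity is integrable in $s$, which you correctly identify later as the delicate point.
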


\begin{remark} \thetag1~ In theorem \ref{global}, the constant $L$ is defined from several constants appearing in the estimates for the linearized equations and the constant $\epsilon$ will be chosen in such a way that $L^2\epsilon <1$. 
\\
\thetag2~ We only consider the dimension $N \geq 3$.
 In fact, in the case $N = 2$, 
$q_1 < 2$,  and so $q_1/2 < 1$.  In this case,
our argument does not work.
\end{remark}


\section{Maximal $L_p$-$L_q$ regularity}

In this section, we show the maximal $L_p$-$L_q$ regularity 
for problem:

\begin{equation}\label{l0}\left\{
\begin{aligned}
&\pd_t \rho + \gamma_2 \dv \bu = f & \quad&\text{in $\R^N$ for $t > 0$}, \\
&\gamma_0 \pd_t \bu - \mu_* \Delta \bu -\nu_* \nabla \dv \bu + \nabla(\gamma_1 \rho) 
- \kappa_* \nabla (\gamma_2 \Delta \rho) 
= \bg  & \quad&\text{in $\R^N$ for $t > 0$}, \\
&(\rho, \bu)|_{t=0} = (\rho_0, \bu_0)& \quad&\text{in $\R^N$},
\end{aligned}\right.
\end{equation}
where  $\gamma_i$ ($i=0,1,2$) are functions 
of $x \in \BR^N$ satisfying the following assumption:

\begin{assumption}\label{assumption}
Let $\gamma_k = \gamma_k (x)$ $(k = 0, 1, 2)$ 
be uniformly continuous functions on $\R^N$. Moreover, 
there exist positive constants $\rho_1$ and $\rho_2$  such that
\begin{equation}\label{assumption1}
\rho_1 \leq \gamma_k (x) \leq  \rho_2, \quad
|\nabla \gamma_k (x)| \leq \rho_2 \quad \text{for any } x \in \R^N. 
\end{equation}

\end{assumption}

We now state the maximal $L_p$-$L_q$ regularity theorem.

\begin{thm}\label{thm:mr}
Let $1 < p, q < \infty$ and 
suppose that Assumption \ref{assumption} holds. 
Then, there exists a constant $\delta_0 \geq 1$
such that the following assertion holds:
For any initial data $(\rho_0, \bu_0) \in D_{q, p} (\BR^N)$
and functions in the right-hand sides 
$(f, \bg) \in L_{p, \delta_0}(\BR_+, W_q^{1, 0}(\BR^N))$,
problem \eqref{l0} admits 
unique solutions $\rho$ and $\bu$ with
\begin{align*}
&\rho \in W^1_{p, \delta_0} (\BR_+, W^1_q(\BR^N)) 
\cap L_{p, \delta_0} (\BR_+, W^3_q(\BR^N)), 
\\
&\bu \in W^1_{p, \delta_0} (\BR_+, L_q(\BR^N)^N) 
\cap L_{p, \delta_0} (\BR_+, W^2_q(\BR^N)^N), 
\end{align*}
possessing the estimate 
\begin{equation}\label{mr}
\begin{aligned}
&\|e^{-\delta t}\pd_t \rho\|_{L_p(\BR_+, W^1_q(\BR^N))}
+
\|e^{-\delta t} \rho\|_{L_p(\BR_+, W^3_q(\BR^N))}\\
&+
\|e^{-\delta t}\pd_t \bu\|_{L_p(\BR_+, L_q(\BR^N))}
+
\|e^{-\delta t} \bu\|_{L_p(\BR_+, W^2_q(\BR^N))}\\
&\leq
C_{p, q, N, \delta_0}
\left(\|(\rho_0, \bu_0)\|_{D_{q, p} (\BR^N)}
+\|(e^{-\delta t}f, e^{-\delta t}\bg)\|_{L_p(\BR_+, W^{1, 0}_q(\BR^N))}\right)
\end{aligned}
\end{equation}
for any $\delta \geq \delta_0$.
\end{thm}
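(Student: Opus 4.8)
The plan is to follow the standard passage from $\CR$-bounded resolvent estimates to maximal $L_p$–$L_q$ regularity via the Weis operator-valued Fourier multiplier theorem. First I would pass to the generalized resolvent problem: replacing $\pd_t$ by a spectral parameter $\lambda$ in \eqref{l0}, one looks for a solution operator family $(\rho,\bu)=(\CA(\lambda)(f,\bg),\,\CB(\lambda)(f,\bg))$ of
\begin{align*}
&\lambda\rho+\gamma_2\dv\bu=f, \\
&\gamma_0\lambda\bu-\mu_*\Delta\bu-\nu_*\nabla\dv\bu+\nabla(\gamma_1\rho)-\kappa_*\nabla(\gamma_2\Delta\rho)=\bg,
\end{align*}
for $\lambda$ ranging over a half-plane $\{\lambda\in\BC:\mathrm{Re}\,\lambda\ge\delta_0\}$, which for $\delta_0$ large sits inside a sector $\{|\arg\lambda|\le\pi-\ve\}\setminus\{|\lambda|\le\lambda_0\}$. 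The goal of this first, and main, part is the $\CR$-boundedness, uniformly in $\lambda$ on that region, of the families
\[
\{(\tau\pd_\tau)^k(\lambda\CA(\lambda),\lambda\nabla\CA(\lambda),\nabla^3\CA(\lambda))\}_{\lambda},\qquad
\{(\tau\pd_\tau)^k(\lambda\CB(\lambda),\lambda^{1/2}\nabla\CB(\lambda),\nabla^2\CB(\lambda))\}_{\lambda}\qquad(k=0,1),
\]
viewed as $\CL\big(W^{1,0}_q(\BR^N),L_q(\BR^N)\big)$-valued maps (vector quantities landing in $L_q(\BR^N)^N$). Granting this, the Weis theorem yields \eqref{mr} for the problem with zero initial data and $(f,\bg)\in L_{p,\delta_0}(\BR_+,W^{1,0}_q(\BR^N))$; the prescribed initial data $(\rho_0,\bu_0)\in D_{q,p}(\BR^N)$ is then accommodated by subtracting the solution of the homogeneous problem $f=\bg=0$, which lies in the required class with norm controlled by $\|(\rho_0,\bu_0)\|_{D_{q,p}(\BR^N)}$ because $D_{q,p}(\BR^N)$ is exactly the trace/real-interpolation space of the solution class and the resolvent bounds make the underlying operator generate an analytic semigroup on the shifted half-plane. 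Uniqueness follows by applying the a priori estimate to the difference of two solutions.

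The heart of the proof is the constant-coefficient case $\gamma_k\equiv\gamma_k^0$, handled by Fourier analysis: taking $\CF_x$ of the resolvent system and eliminating $\hat\rho=\lambda^{-1}(\hat f-\gamma_2^0\,i\xi\cdot\hat\bu)$ turns the momentum equation into a linear algebraic system for $\hat\bu(\xi)$ whose determinant is governed, besides the elliptic factor $(\gamma_0^0\lambda+\mu_*|\xi|^2)^{N-1}$, by the scalar symbol
\[
p(\lambda,\xi)=\gamma_0^0\lambda^2+(\mu_*+\nu_*)|\xi|^2\lambda+\gamma_1^0\gamma_2^0|\xi|^2+\kappa_*(\gamma_2^0)^2|\xi|^4 .
\]
Solving by Cramer's rule gives explicit Fourier-multiplier representations of $\CA(\lambda)$ and $\CB(\lambda)$; the decisive structural input is that the last condition in \eqref{condi}, $\tfrac14\big((\mu_*+\nu_*)/\rho_*\big)^2\neq\rho_*\kappa_*$, is precisely the non-vanishing of the (high-frequency) discriminant of $p$ in the sound/capillary regime, so that its characteristic roots $\lambda_\pm(\xi)$ stay separated for all $\xi\neq0$; combined with $\mu_*>0$, $\mu_*+\nu_*>0$, $\kappa_*>0$, $P'(\rho_*)>0$ this keeps $|p(\lambda,\xi)|$ bounded below appropriately on the sector. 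Splitting the symbols into low- and high-frequency parts, one checks that they obey Mikhlin-type bounds uniform in $\lambda$, which are then upgraded to $\CR$-bounds by the standard fact that a family of Fourier multipliers with uniformly bounded Mikhlin norms is $\CR$-bounded on $L_q(\BR^N)$.

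To remove the constant-coefficient restriction under Assumption \ref{assumption} I would localize: since the $\gamma_k$ are uniformly continuous, $\BR^N$ admits a partition of unity subordinate to a covering by balls of a small radius $r$ on each of which $\gamma_k$ differs from its value $\gamma_k^0$ at the center by at most a prescribed $\eta$. On each ball one applies the constant-coefficient solver and absorbs the error: the multiplication-type perturbations $(\gamma_k-\gamma_k^0)\nabla^{m}(\cdot)$ are small by the oscillation bound $\eta$, while the first-order terms produced by $\nabla\gamma_k$ (whose $L_\infty$-norm is controlled by $\rho_2$) are of strictly lower order and hence carry a negative power of $|\lambda|$; choosing first $\eta$ small and then $\delta_0$ large, the total perturbation has $\CR$-bound $\le 1/2$, so a Neumann-series argument produces a local parametrix, and patching these with the partition of unity (the commutators being again lower order) yields the $\CR$-bounded solution operator for \eqref{l0} on all of $\BR^N$. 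I expect the main obstacle to be exactly the symbol analysis of the previous paragraph — pinning down where condition \eqref{condi}, and especially the discriminant inequality, is needed to keep the characteristic roots separated and the multiplier estimates uniform both as $|\xi|\to0$ and as $|\xi|\to\infty$.
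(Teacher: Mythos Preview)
Your overall architecture---$\CR$-bounded solution operators for the resolvent problem, Weis's theorem for the inhomogeneous problem with zero data, and the analytic semigroup together with the trace/real-interpolation identification $D_{q,p}(\BR^N)$ for the initial data---is exactly what the paper does. Two substantive differences deserve comment.

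First, the paper does not carry out the constant-coefficient symbol analysis plus partition-of-unity localization that you outline. It instead quotes Saito~\cite{Sa} for $\CR$-bounded solution operators $\CA_0(\lambda),\CB_0(\lambda)$ of the \emph{variable-coefficient} problem with $\gamma_1\equiv0$, and then treats the pressure term $\nabla(\gamma_1\rho)$ as a lower-order perturbation of that solver: since $\|\nabla(\gamma_1\CA_0(\lambda)\bF)\|_{L_q}$ carries a factor $|\lambda|^{-1}$, taking $\lambda_0$ large makes the perturbation $\CR$-small and a Neumann series closes. No smallness of oscillation is needed for existence; a freezing-of-coefficients argument appears only in the uniqueness step for the resolvent problem. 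Your localization scheme would also work, but is heavier than what the paper needs.

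Second, your identification of the discriminant condition $\tfrac14\big((\mu_*+\nu_*)/\rho_*\big)^2\neq\rho_*\kappa_*$ as ``the decisive structural input'' for the multiplier bounds is a misattribution. The sectorial lower bound $|p(\lambda,\xi)|\gtrsim(|\lambda|+|\xi|^2)^2$, and with it the $\CR$-boundedness, holds whether or not the roots $\lambda_\pm(\xi)$ coincide; only $\mu_*>0$, $\mu_*+\nu_*>0$, $\kappa_*>0$ and positivity of the lowest-order coefficient enter. Theorem~\ref{thm:mr} assumes only Assumption~\ref{assumption}, not \eqref{condi}. The separation of $\lambda_\pm$ is used later, in Section~4.1, where the explicit solution formulas \eqref{s} and the expansions \eqref{lambda} require $\lambda_+\neq\lambda_-$ to produce the $L_p$--$L_q$ decay rates. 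Finally, uniqueness for the time-dependent problem is not obtained by ``applying the a~priori estimate to the difference'' (that estimate is proved for the constructed solution, so the argument as stated is circular); the paper instead runs a duality argument against a backward homogeneous solution with $C^\infty_0$ terminal data and integrates by parts to force $(\rho(T),\bu(T))=0$.
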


\subsection{$\CR$-boundedness of solution operators}

In this  subsection, we analyze the following resolvent problem 
in order to prove Theorem \ref{thm:mr}.
\begin{equation}\label{r1}
\begin{cases*}
&\lambda \rho + \gamma_2 \dv \bu = f & \quad\text{in $\R^N$},\\
&\gamma_0 \lambda \bu - \mu_* \Delta \bu -\nu_* \nabla \dv \bu 
+ \nabla (\gamma_1 \rho) - \kappa_*\nabla(\gamma_2 \Delta \rho) 
= \bg & \quad\text{in $\R^N$},
\end{cases*}
\end{equation}
where $\mu_*$, $\nu_*$, $\kappa_*$ and 
$\gamma_k = \gamma_k(x)$ are satisfying
\eqref{condi} and \eqref{assumption1}.
Here, $\lambda$ is the resolvent parameter varying in a sector
\[
\Sigma_{\epsilon, \lambda_0} 
=\{\lambda \in \C \mid |\arg \lambda| < \pi - \epsilon, 
|\lambda| \geq \lambda_0\} 
\]
for $0 < \epsilon < \pi/2$ and $\lambda_0 \geq 1$.

We introduce
the definition of the $\CR$-boundedness of operator families.
\begin{dfn}\label{dfn2}
A family of operators $\CT \subset \CL(X,Y)$ is called $\CR$-bounded 
on $\CL(X,Y)$, if there exist constants $C > 0$ and $p \in [1,\infty)$ 
such that for any $n \in \BN$, $\{T_{j}\}_{j=1}^{n} \subset \CT$,
$\{f_{j}\}_{j=1}^{n} \subset X$ and sequences $\{r_{j}\}_{j=1}^{n}$
 of independent, symmetric, $\{-1,1\}$-valued random variables on $[0,1]$, 
we have  the inequality:
$$
\bigg \{ \int_{0}^{1} \|\sum_{j=1}^{n} r_{j}(u)T_{j}f_{j}\|_{Y}^{p}\,du
 \bigg \}^{1/p} \leq C\bigg\{\int^1_0
\|\sum_{j-1}^n r_j(u)f_j\|_X^p\,du\biggr\}^{1/p}.
$$ 
The smallest such $C$ is called $\CR$-bound of $\CT$, 
which is denoted by $\CR_{\CL(X,Y)}(\CT)$.
\end{dfn}

The following theorem is the main result of this subsection.

\begin{thm}\label{thm:Rbdd}
Let $1 < q < \infty$, 
$0 < \epsilon < \pi/2$
and suppose that Assumption \ref{assumption} holds.
Then, there exist a positive constant $\lambda_0 \geq 1$
and operator families 
\begin{align*}
&\CA (\lambda) \in 
{\rm Hol} (\Sigma_{\epsilon, \lambda_0}, 
\CL(W^{1, 0}_q(\R^N), W^3_q(\R^N)))\\
&\CB (\lambda) \in 
{\rm Hol} (\Sigma_{\epsilon, \lambda_0}, 
\CL(W^{1, 0}_q(\R^N), W^2_q(\R^N)^N))
\end{align*}
such that 
for any $\lambda = \delta + i\tau \in \Sigma_{\epsilon, \lambda_0}$
and $\bF = (f, \bg) \in W^{1, 0}_q(\R^N)$, 
\begin{equation*}
\rho = \CA (\lambda) \bF, \enskip
\bu = \CB (\lambda) \bF
\end{equation*}
are unique solutions of problem \eqref{r1},
and 
\begin{equation}\label{eRbdd1}
\begin{aligned}
&\CR_{\CL(W^{1, 0}_q(\R^N), A_q(\R^N))}
(\{(\tau \pd_\tau)^\ell \CS_\lambda \CA (\lambda) \mid 
\lambda \in \Sigma_{\epsilon, \lambda_0}\}) 
\leq 2 \kappa_0,\\
&\CR_{\CL(W^{1, 0}_q(\R^N), B_q(\R^N))}
(\{(\tau \pd_\tau)^\ell \CT_\lambda \CB (\lambda) \mid 
\lambda \in \Sigma_{\epsilon, \lambda_0}\}) 
\leq 2 \kappa_0
\end{aligned}
\end{equation}
for $\ell = 0, 1,$
where 
$\CS_\lambda \rho = (\nabla^3 \rho, \lambda^{1/2}\nabla^2 \rho, \lambda \rho)$,
$\CT_\lambda \bu = (\nabla^2 \bu, \lambda^{1/2}\nabla \bu, \lambda \bu)$,
$A_q(\BR^N) = L_q(\BR^N)^{N^3 + N^2} \times W^1_q(\BR^N)$,
$B_q(\BR^N) = L_q(\BR^N)^{N^3 + N^2+N}$,
and $\kappa_0$ is a constant independent of $\lambda$.
\end{thm}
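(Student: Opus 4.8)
The plan is to establish Theorem \ref{thm:Rbdd} by a standard perturbation-from-constant-coefficients argument. First I would freeze the coefficients $\gamma_k$ at an arbitrary point $x_0 \in \R^N$, replacing $\gamma_k(x)$ by the constants $\gamma_k(x_0)$, and analyze the resulting constant-coefficient resolvent problem on the whole space by the Fourier multiplier method. Taking the Fourier transform in $x$ turns \eqref{r1} into an algebraic system for $\hat\rho(\xi)$ and $\hat\bu(\xi)$; solving it explicitly, one finds that the symbols of the solution operators are rational functions of $\lambda$, $\xi$, and $|\lambda|^{1/2}$, whose denominators do not vanish on $\Sigma_{\epsilon,\lambda_0}$ precisely because of the structural condition \eqref{condi} (in particular $\mu_* > 0$, $\mu_* + \nu_* > 0$, $\kappa_* > 0$, $P'(\rho_*) > 0$, and the non-degeneracy $\frac14((\mu_*+\nu_*)/\rho_*)^2 \neq \rho_*\kappa_*$, which prevents a double root in the characteristic polynomial of the coupled system). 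Then I would verify that the multiplier symbols for $\CS_\lambda \CA(\lambda)$ and $\CT_\lambda \CB(\lambda)$, together with their $(\tau\pd_\tau)^\ell$-derivatives, satisfy the Mikhlin-type bounds $|\pd_\xi^\alpha m(\xi,\lambda)| \leq C_\alpha |\xi|^{-|\alpha|}$ uniformly in $\lambda \in \Sigma_{\epsilon,\lambda_0}$; by the operator-valued Fourier multiplier theorem (Weis's theorem) on $L_q(\R^N)$, this yields $\CR$-boundedness of the constant-coefficient solution operators with an $\CR$-bound $\kappa_0$ that is uniform in the freezing point $x_0$ thanks to \eqref{assumption1}.

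Next I would pass from constant to variable coefficients by a localization and Neumann-series argument. Cover $\R^N$ by balls of small radius, use a partition of unity, and on each patch write $\gamma_k(x) = \gamma_k(x_0) + (\gamma_k(x) - \gamma_k(x_0))$; the difference is small in $L_\infty$ on the patch by uniform continuity, and $\nabla\gamma_k$ is bounded by \eqref{assumption1}, so the commutator terms generated when the constant-coefficient operator is applied to the localized equation are, after multiplication by the cutoff, of lower order and carry a small factor (for $|\lambda| \geq \lambda_0$ with $\lambda_0$ large, the lower-order terms are absorbed). Summing over the patches and using the $\CR$-boundedness version of the Neumann series (the class of $\CR$-bounded operator families is closed under sums and compositions, with the $\CR$-bound of a product dominated by the product of $\CR$-bounds), I obtain operator families $\CA(\lambda)$, $\CB(\lambda)$ holomorphic on a possibly larger sector $\Sigma_{\epsilon,\lambda_0}$ that solve the true variable-coefficient problem \eqref{r1} and inherit the $\CR$-bound $2\kappa_0$ in \eqref{eRbdd1}. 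Uniqueness follows from the a priori estimate that is a consequence of the $\CR$-bound (in particular $L_q$-boundedness) applied to the difference of two solutions, together with the fact that $\lambda_0$ is chosen large.

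The main obstacle is the analysis of the constant-coefficient symbol: because \eqref{r1} couples a first-order (in $\bu$) transport-type equation for $\rho$ with a second-order parabolic system for $\bu$ that contains the \emph{third}-order term $\kappa_*\nabla(\gamma_2\Delta\rho)$, the characteristic polynomial is a quadratic in $\lambda$ whose discriminant is governed exactly by the quantity $\frac14((\mu_*+\nu_*)/\rho_*)^2 - \rho_*\kappa_*$; one must check carefully that the two roots $\lambda_\pm(\xi)$ stay in the resolvent set, that the solution formula has no spurious singularity at the crossover scale $|\xi| \sim 1$ where the dispersive and dissipative effects balance, and that after multiplication by the weights $\nabla^3$, $\lambda^{1/2}\nabla^2$, $\lambda$ (resp. $\nabla^2$, $\lambda^{1/2}\nabla$, $\lambda$) the resulting symbols are genuinely Mikhlin multipliers uniformly down to $|\lambda| = \lambda_0$. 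Handling the low-frequency regime, where the $\lambda\rho$ and $\kappa_*|\xi|^2\rho$ terms compete, is the delicate point; once the symbol bounds are in place, the rest is the now-routine localization machinery.
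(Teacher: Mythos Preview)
Your strategy is sound and is the standard way to establish such results from scratch, but the paper takes a shorter route. It does not carry out the constant-coefficient symbol analysis or the partition-of-unity localization at all; instead it quotes Theorem~3.1 of \cite{Sa}, which already furnishes $\CR$-bounded solution operators $\CA_0(\lambda), \CB_0(\lambda)$ for the \emph{variable-coefficient} problem \eqref{r2}, i.e.\ \eqref{r1} with the pressure term $\nabla(\gamma_1\rho)$ removed. The paper then reinstates $\nabla(\gamma_1\rho)$ as a single global lower-order perturbation: setting $\CF(\lambda)\bF = (0,\, -\nabla(\gamma_1\CA_0(\lambda)\bF))$, one observes that only $\CA_0(\lambda)\bF$ and its gradient appear, and these carry extra negative powers of $|\lambda|$ relative to the data by \eqref{eRbdd}; hence $\CR_{\CL(W^{1,0}_q(\BR^N))}(\{(\tau\pd_\tau)^\ell\CF(\lambda)\}) \leq 1/2$ once $\lambda_0$ is large, and a global Neumann series gives $\CA(\lambda) = \CA_0(\lambda)(\bI - \CF(\lambda))^{-1}$, $\CB(\lambda) = \CB_0(\lambda)(\bI - \CF(\lambda))^{-1}$. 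No localization is used for existence; a freezing-at-a-point argument appears only in the uniqueness step. Your approach is self-contained and reproduces in outline what \cite{Sa} presumably does, whereas the paper's proof is essentially a three-line perturbation on top of that reference.

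One small correction: the discriminant condition $\frac14((\mu_*+\nu_*)/\rho_*)^2 \neq \rho_*\kappa_*$ in \eqref{condi} is not what guarantees invertibility of the resolvent symbol, and in fact \eqref{condi} is not among the hypotheses of Theorem~\ref{thm:Rbdd} (only Assumption~\ref{assumption} is; note that $\rho_*$ and $P'(\rho_*)$ do not even appear in \eqref{r1}). That condition is used later, in Theorem~\ref{semi}, to keep the roots $\lambda_\pm(\xi)$ simple so that the explicit formulas \eqref{s}--\eqref{lambda} make sense; a double root would not obstruct $\CR$-boundedness, only complicate the solution formula.
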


Postponing the proof of Theorem \ref{thm:Rbdd}, 
we are concerned with time dependent problem 
\eqref{l0}.
Let $\CA$ be a linear operator defined by 
\[
\CA (\rho, \bu) 
= (- \gamma_2 \dv \bu, 
\gamma_0^{-1} \mu_* \Delta \bu 
+ \gamma_0^{-1} \nu_* \nabla \dv \bu 
- \gamma_0^{-1} \nabla (\gamma_1 \rho)
+ \gamma_0^{-1} \kappa_* \nabla (\gamma_2 \Delta \rho))
\]
for $(\rho, \bu) \in W_q^{1, 0}(\BR^N)$.
Since Definition \ref{dfn2} with $n = 1$ 
implies the uniform boundedness
of the operator family $\CT$, 
 solutions
$\rho$ and $\bu$ of equations \eqref{r1} satisfy 
the resolvent estimate:
\begin{equation}\label{resolvent}
|\lambda|\|(\rho, \bu)\|_{W_q^{1, 0}(\BR^N)} 
+ \|(\rho, \bu)\|_{W^{3, 2}_q(\BR^N)}
\leq
C_{\kappa_0}
\|(f, \bg)\|_{W^{1, 0}_q(\BR^N)}
\end{equation}
for any $\lambda \in \Sigma_{\epsilon, \lambda_0}$ 
and $(f, \bg) \in W^{1, 0}_q(\BR^N)$.
By \eqref{resolvent}, we have the following theorem.

\begin{thm}\label{thm:semi1}
Let $1 < q < \infty$ 
and suppose that Assumption \ref{assumption} holds.  
Then, the operator $\CA$ generates an analytic
semigroup $\{e^{\CA t}\}_{t\geq 0}$ on $W^{1, 0}_q(\BR^N)$.  
Moreover, there
exists constants $\delta_1 \geq 1$ and $C_{q, N, \delta_1} > 0$
such that $\{e^{\CA t}\}_{t\geq 0}$ satisfies the estimates: 
\begin{align*}
\|e^{\CA t} (\rho_0, \bu_0) \|_{W^{1, 0}_q(\BR^N)}
&\leq C_{q, N, \delta_1} e^{\delta_1 t} \|(\rho_0, \bu_0)\|_{W^{1, 0}_q(\BR^N)},\\
\|\pd_t e^{\CA t} (\rho_0, \bu_0) \|_{W^{1, 0}_q(\BR^N)}
&\leq C_{q, N, \delta_1} e^{\delta_1 t} t^{-1} \|(\rho_0, \bu_0)\|_{W^{1, 0}_q(\BR^N)},\\
\|\pd_t e^{\CA t} (\rho_0, \bu_0) \|_{W^{1, 0}_q(\BR^N)}
&\leq C_{q, N, \delta_1} e^{\delta_1 t} \|(\rho_0, \bu_0)\|
_{W^{3, 2}_q (\BR^N)}
\end{align*}
for any $t > 0$.
\end{thm}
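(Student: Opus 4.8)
The plan is to derive Theorem \ref{thm:semi1} from the resolvent estimate \eqref{resolvent} together with the $\CR$-boundedness statement of Theorem \ref{thm:Rbdd}, using the standard characterization of analytic semigroup generators. First I would observe that \eqref{resolvent} with $\ell = 0$ already gives, for every $\lambda \in \Sigma_{\epsilon, \lambda_0}$, the bound $|\lambda|\,\|(\rho,\bu)\|_{W^{1,0}_q} \leq C_{\kappa_0}\|(f,\bg)\|_{W^{1,0}_q}$ where $(f,\bg) = (\lambda I - \CA)(\rho,\bu)$. This shows that $\lambda I - \CA$ is injective; surjectivity onto $W^{1,0}_q(\BR^N)$ is exactly the existence part of Theorem \ref{thm:Rbdd} (the operators $\CA(\lambda),\CB(\lambda)$ furnish the solution for arbitrary right-hand side $\bF = (f,\bg) \in W^{1,0}_q$). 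Hence $\Sigma_{\epsilon,\lambda_0} \subset \rho(\CA)$ and $\|(\lambda I - \CA)^{-1}\|_{\CL(W^{1,0}_q)} \leq C_{\kappa_0}/|\lambda|$ there. Since $\Sigma_{\epsilon,\lambda_0}$ contains a sector $\{|\arg\lambda| < \pi - \epsilon\}$ shifted by $\lambda_0$ along the real axis, shifting the operator by $\delta_1 := \lambda_0$ puts us in the classical Hille--Yosida / sectorial setting: $\CA - \delta_1 I$ is sectorial and generates a bounded analytic semigroup, so $\CA$ itself generates an analytic semigroup $\{e^{\CA t}\}_{t\geq 0}$ with the exponential factor $e^{\delta_1 t}$. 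One should check $\CA$ is closed and densely defined on $W^{1,0}_q(\BR^N)$ with domain (essentially) $W^{3,2}_q(\BR^N)$; density is clear since $C_0^\infty$ functions are dense, and closedness follows from \eqref{resolvent} (the graph norm of $\CA$ is controlled by the $W^{3,2}_q$ norm, which is in turn controlled via the resolvent).

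Next I would establish the three estimates. The first, $\|e^{\CA t}(\rho_0,\bu_0)\|_{W^{1,0}_q} \leq C e^{\delta_1 t}\|(\rho_0,\bu_0)\|_{W^{1,0}_q}$, is just the growth bound of the analytic semigroup and comes directly from the Dunford integral representation $e^{\CA t} = \frac{1}{2\pi i}\int_\Gamma e^{\lambda t}(\lambda I - \CA)^{-1}\,d\lambda$ over a suitable contour $\Gamma \subset \delta_1 + \Sigma_{\epsilon}$, using $\|(\lambda I-\CA)^{-1}\| \lesssim 1/|\lambda|$ and deforming $\Gamma$ to pass through $\lambda = \delta_1 + 1/t$. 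The second estimate, $\|\pd_t e^{\CA t}(\rho_0,\bu_0)\|_{W^{1,0}_q} \leq C e^{\delta_1 t} t^{-1}\|(\rho_0,\bu_0)\|_{W^{1,0}_q}$, follows the same way from $\pd_t e^{\CA t} = \frac{1}{2\pi i}\int_\Gamma \lambda e^{\lambda t}(\lambda I - \CA)^{-1}\,d\lambda$: the extra factor $\lambda$ is compensated by the $1/|\lambda|$ in the resolvent bound, and optimizing the contour radius $\sim 1/t$ produces the $t^{-1}$ singularity — this is the textbook analytic-semigroup smoothing estimate. The third estimate, $\|\pd_t e^{\CA t}(\rho_0,\bu_0)\|_{W^{1,0}_q} \leq C e^{\delta_1 t}\|(\rho_0,\bu_0)\|_{W^{3,2}_q}$, is where one uses that on the domain $\pd_t e^{\CA t}(\rho_0,\bu_0) = e^{\CA t}\CA(\rho_0,\bu_0)$ together with $\|\CA(\rho_0,\bu_0)\|_{W^{1,0}_q} \leq C\|(\rho_0,\bu_0)\|_{W^{3,2}_q}$ (immediate from the definition of $\CA$ and the pointwise bounds \eqref{assumption1} on the coefficients $\gamma_k$ and $\nabla\gamma_k$), and then applying the first estimate to $\CA(\rho_0,\bu_0) \in W^{1,0}_q(\BR^N)$.

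The main obstacle I anticipate is not in any single estimate but in pinning down precisely that $\CA$ is sectorial in the Kato sense: one needs the resolvent bound on a genuine \emph{sector} rather than merely on a half-plane, and also that the domain $D(\CA)$ equipped with the graph norm coincides (up to equivalence) with $W^{3,2}_q(\BR^N)$ — the latter is essentially the a priori estimate $\|(\rho,\bu)\|_{W^{3,2}_q} \leq C(\|\CA(\rho,\bu)\|_{W^{1,0}_q} + \|(\rho,\bu)\|_{W^{1,0}_q})$, which is contained in \eqref{resolvent} by taking $|\lambda| = \lambda_0$ and moving the lower-order term to the right. Once this identification is in hand the rest is the standard machinery. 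I would also note that the $\CR$-boundedness assertion of Theorem \ref{thm:Rbdd} is genuinely stronger than what Theorem \ref{thm:semi1} needs — for the mere generation of an analytic semigroup and the three stated estimates, uniform boundedness of the resolvent family (the $n=1$ case of Definition \ref{dfn2}, which is exactly how \eqref{resolvent} was extracted) suffices; the full $\CR$-bound is reserved for the maximal $L_p$-$L_q$ regularity in Theorem \ref{thm:mr} via the operator-valued Fourier multiplier (Weis) theorem.
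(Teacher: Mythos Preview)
Your proposal is correct and follows essentially the same approach as the paper. The paper itself gives no separate proof of Theorem \ref{thm:semi1}: it simply states that the resolvent estimate \eqref{resolvent} (obtained from Theorem \ref{thm:Rbdd} via the $n=1$ case of Definition \ref{dfn2}) implies the theorem, and you have filled in exactly the standard sectorial/analytic-semigroup machinery that this sentence is pointing to.
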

Combining Theorem \ref{thm:semi1} with a real interpolation method
(cf. Shibata and Shimizu \cite[Proof of Theorem 3.9]{SS2}), we have
the following result for the equation \eqref{l0} 
with $(f, \bg) = (0, 0)$.

\begin{thm}\label{thm:semi2}
Let $1 < p, q < \infty$, 
and suppose that Assumption \ref{assumption} holds.  
Then, for any $(\rho_0, \bu_0) \in 
D_{q, p} (\BR^N)$, 
problem \eqref{l0} with $(f, \bg) = (0, 0)$
admits a unique solution $(\rho, \bu) = e^{\CA t} (\rho_0, \bu_0)$
possessing the estimate:
\begin{equation}\label{semi2}
\begin{aligned}
&\|e^{-\delta t} \pd_t \rho\|_{L_p(\R_+, W^1_q(\BR^N))}
+ \|e^{-\delta t}\rho\|_{L_p(\R_+, W^3_q(\BR^N))}\\
&+ \|e^{-\delta t}\pd_t \bu\|_{L_p(\R_+, L_q(\BR^N))}
+ \|e^{-\delta t}\bu\|_{L_p(\R_+, W^2_q(\BR^N))}\\
&
\leq C_{p, q, N, \delta_1} 
\|(\rho_0, \bu_0)\|
_{D_{q, p} (\BR^N)}
\end{aligned}
\end{equation}
for any $\delta \geq \delta_1$.

\end{thm}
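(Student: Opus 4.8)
The plan is to deduce Theorem~\ref{thm:semi2} from Theorem~\ref{thm:semi1} by the standard real-interpolation characterization of the trace space of the maximal regularity class, following the argument of Shibata and Shimizu \cite{SS2}. The key point is that the space $D_{q,p}(\BR^N)=B^{3-2/p}_{q,p}(\BR^N)\times B^{2(1-1/p)}_{q,p}(\BR^N)^N$ is precisely the real interpolation space one obtains by interpolating the "base space" $W^{1,0}_q(\BR^N)$ and the "regularity space" $W^{3,2}_q(\BR^N)$ with exponents $(\theta,p)$, $\theta = 1-1/p$; concretely,
\[
(W^{1,0}_q(\BR^N),\,W^{3,2}_q(\BR^N))_{1-1/p,\,p} = D_{q,p}(\BR^N),
\]
which is immediate from $(L_q,W^2_q)_{1-1/p,p}=B^{2(1-1/p)}_{q,p}$ and $(W^1_q,W^3_q)_{1-1/p,p}=B^{3-2/p}_{q,p}$ together with the fact that interpolation commutes with finite products.

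First I would record the three smoothing estimates for $\{e^{\CA t}\}_{t\ge0}$ supplied by Theorem~\ref{thm:semi1}: boundedness on $W^{1,0}_q$ with the factor $e^{\delta_1 t}$, the "parabolic" gain $\|\pd_t e^{\CA t}(\rho_0,\bu_0)\|_{W^{1,0}_q}\le C e^{\delta_1 t} t^{-1}\|(\rho_0,\bu_0)\|_{W^{1,0}_q}$, and the bound $\|\pd_t e^{\CA t}(\rho_0,\bu_0)\|_{W^{1,0}_q}\le C e^{\delta_1 t}\|(\rho_0,\bu_0)\|_{W^{3,2}_q}$; note also that the resolvent estimate \eqref{resolvent} upgrades the last display to control of the full $W^{3,2}_q$-norm of $e^{\CA t}(\rho_0,\bu_0)$ by the $W^{3,2}_q$-norm of the data. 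Setting $v(t)=e^{-\delta t}e^{\CA t}(\rho_0,\bu_0)$ for $\delta\ge\delta_1+1$, the function $v$ solves an exponentially damped equation, and $t\mapsto \pd_t v(t)$ maps $W^{3,2}_q$ data boundedly into $L_p(\BR_+,W^{1,0}_q)$ (by the third estimate, since $e^{(\delta_1-\delta)t}\in L_p(\BR_+)$) and maps $W^{1,0}_q$ data of the form "$t\pd_t$" boundedly in the weak-$L_p$ sense. The real interpolation functor $(\cdot,\cdot)_{1-1/p,p}$ applied to the pair of linear maps
\[
(\rho_0,\bu_0)\longmapsto \pd_t v(\cdot)\in L_p(\BR_+,W^{1,0}_q),\qquad
(\rho_0,\bu_0)\longmapsto \pd_t v(\cdot)\in L^{w}_p(\BR_+,W^{1,0}_q),
\]
then yields, by the Marcinkiewicz-type interpolation between $L_p$ and weak-$L_p$ and by the identification of the interpolation space above, that $\pd_t v\in L_p(\BR_+,W^{1,0}_q)$ with norm controlled by $\|(\rho_0,\bu_0)\|_{D_{q,p}}$. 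The bound for $v$ itself in $L_p(\BR_+,W^{3,2}_q)$ follows similarly (or by writing $v(t)=v(0)e^{-\delta t}+\int_0^t\pd_s v\,\ldots$ and using the equation to trade time-derivative bounds for spatial-derivative bounds via \eqref{resolvent}), and unravelling $e^{-\delta t}$ gives exactly \eqref{semi2}. Uniqueness is inherited from the uniqueness already asserted in Theorems~\ref{thm:mr}/\ref{thm:semi1}.

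The main obstacle is the interpolation bookkeeping: one must verify carefully that the abstract "trace method" characterization of the maximal-$L_p$-regularity data space — i.e. that the initial trace of functions in $W^1_p(\BR_+,W^{1,0}_q)\cap L_p(\BR_+,W^{3,2}_q)$ (with the exponential weight) runs exactly over $D_{q,p}(\BR^N)$ — applies to this particular operator $\CA$, which is a mixed-order system (third order in $\rho$, second order in $\bu$, with the density equation only first order in $t$) rather than a standard parabolic operator. This is where the precise form of the smoothing estimates in Theorem~\ref{thm:semi1}, and the fact that the resolvent estimate \eqref{resolvent} controls the $W^{3,2}_q$-norm with the correct powers of $\lambda$, become essential; one invokes \cite[Proof of Theorem 3.9]{SS2} to push the generic argument through. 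Everything else — the exponential shift $\delta\ge\delta_1$, the passage from semigroup estimates to $L_p$-in-time estimates, and the product structure of $D_{q,p}$ — is routine.
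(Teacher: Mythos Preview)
Your proposal is correct and follows essentially the same route as the paper: the paper's entire argument for Theorem~\ref{thm:semi2} is the single sentence ``Combining Theorem~\ref{thm:semi1} with a real interpolation method (cf.\ Shibata and Shimizu \cite[Proof of Theorem 3.9]{SS2}) \ldots'', and you have unpacked exactly this, identifying $D_{q,p}(\BR^N)=(W^{1,0}_q(\BR^N),W^{3,2}_q(\BR^N))_{1-1/p,p}$ and invoking the trace characterization of the maximal regularity class via the semigroup estimates of Theorem~\ref{thm:semi1}. Your treatment is in fact more detailed than the paper's own; the only cosmetic point is that the weak-$L_p$/Marcinkiewicz phrasing is slightly nonstandard compared with the direct $K$-functional or trace-method computation one finds in \cite{SS2}, but it leads to the same conclusion.
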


The remaining part of this subsection
is devoted to proving Theorem \ref{thm:Rbdd}. 
 For this purpose, we use  the following lemmas.

\begin{lem}\label{lem:5.3}
$\thetag1$ 
Let $X$ and $Y$ be Banach spaces, 
and let $\CT$ and $\CS$ be $\CR$-bounded families in $\CL(X, Y)$. 
Then, $\CT+\CS=\{T+S \mid T\in \CT, S\in \CS\}$ is also 
$\CR$-bounded family in $\CL(X, Y)$ and 
\[
\CR_{\CL(X, Y)}(\CT+\CS)\leq \CR_{\CL(X, Y)}(\CT)
+\CR_{\CL(X, Y)}(\CS).
\]
$\thetag2$
Let $X$, $Y$ and $Z$ be Banach spaces and 
let $\CT$ and $\CS$ be $\CR$-bounded families
 in $\CL(X, Y)$ and $\CL(Y, Z)$, respectively. 
Then, $\CS\CT=\{ST \mid T\in \CT, S\in \CS\}$ is also 
an $\CR$-bounded family 
in $\CL(X, Z)$ and 
\[
\CR_{\CL(X, Z)}(\CS\CT)\leq \CR_{\CL(X, Y)}(\CT)\CR_{\CL(Y, Z)}(\CS). 
\]
$\thetag3$
Let $1<p, q<\infty$ and let $D$ be domain in $\BR^N$.
Let $m(\lambda)$ be a bounded function 
defined on a subset $\Lambda$ 
in a complex plane $\BC$ and let $M_m(\lambda)$ 
be a multiplication operator with 
$m(\lambda)$ defined by $M_m(\lambda)f=m(\lambda)f$ 
for any $f\in L_q(D)$. 
Then, 
\[
\CR_{\CL(L_q(D))}(\{M_m(\lambda) \mid \lambda \in \Lambda\})\leq 
C_{N, q, D}\|m\|_{L_\infty(\Sigma)}. 
\]
\end{lem}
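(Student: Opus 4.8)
\textbf{Plan of proof for Lemma \ref{lem:5.3}.} The three assertions are the standard "calculus" of $\CR$-boundedness, and the plan is to prove each directly from Definition \ref{dfn2} using elementary properties of the Rademacher functions $\{r_j\}$. Throughout I fix once and for all an exponent $p \in [1,\infty)$; it is a standard fact (Kahane's inequality), which I would cite, that the $\CR$-bound does not depend on the choice of $p$, so it suffices to verify the defining inequality for a single $p$.

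\textbf{Part $\thetag1$ (sums).} Given $n\in\BN$, operators $T_j\in\CT$, $S_j\in\CS$, vectors $f_j\in X$ and Rademacher functions $r_j$, write $(T_j+S_j)f_j = T_jf_j + S_jf_j$ and apply the triangle inequality in $L_p((0,1),Y)$:
\[
\Bigl\{\int_0^1\bigl\|\textstyle\sum_j r_j(u)(T_j+S_j)f_j\bigr\|_Y^p\,du\Bigr\}^{1/p}
\le \Bigl\{\int_0^1\bigl\|\textstyle\sum_j r_j(u)T_jf_j\bigr\|_Y^p\,du\Bigr\}^{1/p}
+\Bigl\{\int_0^1\bigl\|\textstyle\sum_j r_j(u)S_jf_j\bigr\|_Y^p\,du\Bigr\}^{1/p}.
\]
Now bound the first term by $\CR_{\CL(X,Y)}(\CT)$ times the Rademacher norm of $\{f_j\}$ and the second by $\CR_{\CL(X,Y)}(\CS)$ times the same quantity; adding gives the claim with constant $\CR_{\CL(X,Y)}(\CT)+\CR_{\CL(X,Y)}(\CS)$.

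\textbf{Part $\thetag2$ (composition).} Given $T_j\in\CT\subset\CL(X,Y)$ and $S_j\in\CS\subset\CL(Y,Z)$, set $g_j := T_jf_j\in Y$. Applying the defining inequality for $\CS$ to the vectors $\{g_j\}\subset Y$,
\[
\Bigl\{\int_0^1\bigl\|\textstyle\sum_j r_j(u)S_jT_jf_j\bigr\|_Z^p\,du\Bigr\}^{1/p}
\le \CR_{\CL(Y,Z)}(\CS)\Bigl\{\int_0^1\bigl\|\textstyle\sum_j r_j(u)T_jf_j\bigr\|_Y^p\,du\Bigr\}^{1/p},
\]
and then applying the defining inequality for $\CT$ to the right-hand side bounds it by $\CR_{\CL(Y,Z)}(\CS)\,\CR_{\CL(X,Y)}(\CT)$ times the Rademacher norm of $\{f_j\}$ in $X$, which is exactly the assertion.

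\textbf{Part $\thetag3$ (multiplication operators).} Here the point is that for a fixed scalar $\lambda$ the operator $M_m(\lambda)$ acts on $L_q(D)$ as multiplication by the number $m(\lambda)$, so it is genuinely scalar. The key input is the scalar Kahane contraction principle: for any complex numbers $a_1,\dots,a_n$ with $|a_j|\le K$ and any elements $h_1,\dots,h_n$ of a Banach space $E$,
\[
\Bigl\{\int_0^1\bigl\|\textstyle\sum_j r_j(u)\,a_j h_j\bigr\|_E^p\,du\Bigr\}^{1/p}
\le C\,K\,\Bigl\{\int_0^1\bigl\|\textstyle\sum_j r_j(u) h_j\bigr\|_E^p\,du\Bigr\}^{1/p},
\]
with $C$ an absolute constant (for real scalars $C=1$; for complex scalars one pays a factor by splitting into real and imaginary parts). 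Apply this with $E = L_q(D)$, $a_j = m(\lambda_j)$ (so $K = \|m\|_{L_\infty(\Lambda)}$) and $h_j = f_j$; since $M_m(\lambda_j)f_j = m(\lambda_j)f_j = a_j h_j$, this yields $\CR_{\CL(L_q(D))}(\{M_m(\lambda)\}) \le C_{N,q,D}\|m\|_{L_\infty(\Lambda)}$. One should also record that the Rademacher norm on $L_q(D)$ is comparable (by Fubini and the scalar Khintchine inequality) to $\bigl\|(\sum_j|f_j|^2)^{1/2}\bigr\|_{L_q(D)}$, which is the concrete form in which part $\thetag3$ is used later; the contraction principle argument above does not actually need this, but it clarifies the $L_q$-dependence of the constant.

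\textbf{Main obstacle.} None of the three parts is deep; the only genuinely non-trivial ingredient is the contraction principle / Kahane's inequality invoked in parts $\thetag1$ and $\thetag3$ (and implicitly in the $p$-independence of $\CR$-bounds). The expository care needed is to state clearly which constant is absolute and which depends on $q$ and $D$, and to handle the complex-scalar case in part $\thetag3$ correctly. Since these are textbook facts, the proof of the lemma can be given almost entirely by citation — e.g. to Denk–Hieber–Prüss or Weis — with the short computations above included only for completeness.
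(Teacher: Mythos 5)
Your proposal is correct, but note that the paper does not actually prove Lemma 2.6 — it disposes of all three assertions by citation to Denk--Hieber--Pr\"u{\ss} (Proposition 3.4 and Remark 3.2(4)) and to Bourgain. Your write-up simply unwinds those citations into the standard direct arguments, and all three are sound: part~(1) is the triangle inequality in $L_p((0,1),Y)$; part~(2) is the two-stage application of the defining inequality, first to $\CS$ with test vectors $g_j=T_jf_j\in Y$ and then to $\CT$; part~(3) is Kahane's contraction principle, with the factor of $2$ (or $\pi/2$) coming from splitting complex scalars into real and imaginary parts. Two small remarks. First, the contraction-principle constant in part~(3) is in fact absolute and has nothing to do with $N$, $q$ or $D$, so the paper's notation $C_{N,q,D}$ (and your closing remark about the ``$L_q$-dependence of the constant'') is over-cautious; the Khintchine square-function reformulation you mention is often how the lemma is \emph{used} but plays no role in its \emph{proof}. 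Second, the paper's statement contains a typo, $\|m\|_{L_\infty(\Sigma)}$ where $\Lambda$ is clearly intended; you silently corrected this, which is right.
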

\begin{proof} For the assertions (1) and (2) 
we refer \cite[Proposition 3.4]{DHP}, 
and for the assertions (3) 
we refer \cite[Remarks 3.2 (4)]{DHP} (also see \cite{Bourgain}).  
\end{proof}

\begin{proof}[Proof of Theorem \ref{thm:Rbdd}]
We first construct $\CR$-bounded solution operators.
According to Theorem 3.1 in \cite{Sa},
we have 
the operator families 
\begin{align*}
&\CA_0 (\lambda) \in 
{\rm Hol} (\Sigma_{\epsilon, \lambda_0}, 
\CL(W^{1, 0}_q(\R^N), W^3_q(\R^N)))\\
&\CB_0 (\lambda) \in 
{\rm Hol} (\Sigma_{\epsilon, \lambda_0}, 
\CL(W^{1, 0}_q(\R^N), W^2_q(\R^N)^N))
\end{align*}
such that 
for any $\lambda \in \Sigma_{\epsilon, \lambda_0}$
and $\bF \in W^{1, 0}_q(\R^N)$, 
\begin{equation}\label{persol}
\rho = \CA_0(\lambda) \bF, \enskip
\bu = \CB_0(\lambda) \bF
\end{equation}
uniquely solve the equations
\begin{equation}\label{r2}
\begin{cases*}
&\lambda \rho + \gamma_2 \dv \bu = f & \quad\text{in $\R^N$},\\
&\gamma_0 \lambda \bu - \mu_* \Delta \bu -\nu_* \nabla \dv \bu 
 - \kappa_*\nabla (\gamma_2 \Delta \rho) 
= \bg & \quad\text{in $\R^N$},
\end{cases*}
\end{equation}
which is the case where \eqref{r1} with $\gamma_1 = 0$.
Moreover, we  know that 
\begin{equation}\label{eRbdd}\begin{aligned}
\CR_{\CL(W^{1, 0}_q(\R^N), A_q(\R^N))}
(\{(\tau \pd_\tau)^\ell
\CS_\lambda \CA_0 (\lambda) \mid \lambda \in 
\Sigma_{\epsilon, \lambda_0}\})
&\leq \kappa_0, \\ 
\CR_{\CL(W^{1, 0}_q(\R^N), B_q(\R^N))}
(\{(\tau \pd_\tau)^\ell
\CT_\lambda \CB_0 (\lambda) \mid \lambda 
\in \Sigma_{\epsilon, \lambda_0}\})
&\leq \kappa_0 \quad (\ell = 0, 1)
\end{aligned}\end{equation}
with some constant $\kappa_0$. 
Inserting \eqref{persol} into the left-hand sides of \eqref{r1},
we have
\begin{equation}\label{r3}
\begin{cases*}
&\lambda \rho + \gamma_2 \dv \bu = f 
& \quad\text{in $\R^N$},\\
&\gamma_0 \lambda \bu - \mu_* \Delta \bu -\nu_* \nabla \dv \bu 
+ \nabla (\gamma_1 \rho)
 - \kappa_*\nabla (\gamma_2 \Delta \rho) 
= \bg + \nabla (\gamma_1 \CA_0 (\lambda) \bF)
& \quad\text{in $\R^N$},
\end{cases*}
\end{equation}
Set
$\CF(\lambda) \bF = (0, - \nabla (\gamma_1 \CA_0 (\lambda) \bF))$.
Let 
$n \in \N$, 
$\{\lambda_\ell \}^n_{\ell = 1} \subset (\Sigma_{\epsilon, \lambda_0})^n$,
and $\{\bF_\ell\}^n_{\ell = 1} \subset (W^{1, 0}_q (\R^N))^n$.

By Lemma \ref{lem:5.3} and \eqref{assumption1}, we have
\begin{align*}
&\int^1_0 \|\sum^n_{\ell = 1} r_\ell (u) \CF (\lambda_\ell) \bF_\ell\|^q_{W^{1, 0}_q(\R^N)}\,du\\
&=\int^1_0 \|\sum^n_{\ell = 1} r_\ell (u) \nabla (\gamma_1 \CA_0 (\lambda_\ell) \bF_\ell) \|^q_{L_q(\R^N)}\,du\\
&\leq C_{\rho_2}^q 
\left( \int^1_0 \|\sum^n_{\ell = 1} r_\ell (u) \CA_0 (\lambda_\ell) \bF_\ell\|^q_{L_q(\R^N)}\,du
+ \int^1_0 \|\sum^n_{\ell = 1} r_\ell (u) \nabla \CA_0 (\lambda_\ell) \bF_\ell\|^q_{L_q(\R^N)}\,du \right)\\
& \leq C_{\rho_2}^q \kappa_0^q (\lambda_0^{-3q/2} + \lambda_0^{-q})
\int^1_0 \|\sum^n_{\ell = 1} r_\ell (u) \bF_\ell\|^q_{W^{1, 0}_q(\R^N)}\,du.
\end{align*}
Choosing $\lambda_0 \geq 1$ so large that 
$C_{\rho_2}^q \kappa_0^q (\lambda_0^{-3q/2} + \lambda_0^{-q}) \leq (1/2)^q$,
we have
\begin{equation}\label{2.12}
\CR_{\CL(W^{1, 0}_q(\R^N))}
(\{\CF (\lambda)
 \mid \lambda\in \Sigma_{\epsilon, \lambda_0}\})\leq 1/2. 
\end{equation}
Analogously, we have
\begin{equation}\label{2.12'}
\CR_{\CL(W^{1, 0}_q(\R^N))}
(\{\tau \pd \tau \CF (\lambda)
 \mid \lambda\in \Sigma_{\epsilon, \lambda_0}\})\leq 1/2. 
\end{equation}
By \eqref{2.12} and \eqref{2.12'}, 
for each $\lambda \in \Sigma_{\epsilon, \lambda_0}$,  
$(\bI - \CF (\lambda))^{-1} = \bI + \sum^\infty_{k = 1} \CF (\lambda)^k$
exists and 
\begin{equation}\label{i}
\CR_{\CL(W^{1, 0}_q(\R^N))}(\{(\tau\pd_\tau)^\ell
(\bI - \CF (\lambda))^{-1} \mid 
\lambda \in \Sigma_{\epsilon, \lambda_0}\}) \leq 2\quad
(\ell = 0, 1), 
\end{equation}
where $\bI$ is the identity operator. 
Setting $\CA (\lambda) = \CA_0 (\lambda) (\bI - \CF (\lambda))^{-1}$, 
$\CB (\lambda) = \CB_0 (\lambda) (\bI - \CF (\lambda))^{-1}$, 
by \eqref{eRbdd}, \eqref{i} and Lemma \ref{lem:5.3}, 
we see that 
$(\rho, \bu) 
= (\CA (\lambda) \bF, \CB (\lambda) \bF)$ 
is a solution to \eqref{r1} 
and $\CA(\lambda)$ and $\CB(\lambda)$ possess  the estimates
\eqref{eRbdd1}.

We next show the uniqueness of solutions.
Let $B_d(x_0) \subset \BR^N$ be the ball of radius $d > 0$
centered at $x_0 \in \BR^N$.
In view of \eqref{assumption1}, we may assume that

\begin{equation}\label{assumption2}
|\gamma_k (x) - \gamma_k (x_0)| \leq \rho_2 M_1 
\text{ for any } x \in B_d(x_0) \enskip (k = 0, 1, 2),
\end{equation}
where we set $M_1 = d$.
We will choose $M_1$ small enough eventually, 
so that we may assume that $0 < M_1 < 1$ below.

Let $(\rho, \bu)$ be a solution of the homogeneous equations:
\begin{equation}\label{r4}
\begin{cases*}
&\lambda \rho + \gamma_2 \dv \bu = 0 & \quad\text{in $\R^N$},\\
&\gamma_0 \lambda \bu - \mu_* \Delta \bu -\nu_* \nabla \dv \bu 
+ \nabla (\gamma_1 \rho) - \kappa_*\nabla(\gamma_2 \Delta \rho) 
= 0 & \quad\text{in $\R^N$}.
\end{cases*}
\end{equation}
By \eqref{r4}, 
$(\rho, \bu)$ satisfies 
the following equations:
\begin{equation*}
\begin{cases*}
&\lambda \rho + \gamma_2 (x_0) \dv \bu = F(\rho, \bu) & \quad\text{in $\R^N$},\\
&\gamma_0 (x_0) \lambda \bu - \mu_* \Delta \bu -\nu_* \nabla \dv \bu 
+ \nabla (\gamma_1 (x_0) \rho) - \kappa_*\nabla(\gamma_2 (x_0) \Delta \rho) 
= G(\rho, \bu) & \quad\text{in $\R^N$},
\end{cases*}
\end{equation*}
where
\begin{align*}
&F(\rho, \bu) 
= (\gamma_2 (x_0) - \gamma_2) \dv \bu,\\
&G(\rho, \bu)
= (\gamma_0 - \gamma_0 (x_0))\lambda \bu 
+ \nabla ((\gamma_1 (x_0) - \gamma_1)\rho) 
- \kappa_*\nabla((\gamma_2 (x_0) - \gamma_2) \Delta \rho).
\end{align*}
By \eqref{assumption1} and \eqref{assumption2}, 
we have
\begin{equation}\label{unique1}
\|(F(\rho, \bu), G(\rho, \bu))\|_{W^{1, 0}_q (\R^N)}
\leq C_{\rho_2} M_1 \|(\nabla^3 \rho, \nabla^2 \bu, \lambda \bu)\|_{L_q(\R^N)}
+C_{\rho_2} \|(\rho, \nabla \rho, \nabla^2 \rho, \nabla \bu)\|_{L_q(\R^N)}.
\end{equation}
On the other hand, by \eqref{eRbdd1}, 
we have
\begin{equation}\label{unique2}
\|(\rho, \bu)\|_{W^{3, 2}_q(\R^N)} 
+ \lambda_0^{1/2} \|(\rho, \nabla \rho, \nabla^2 \rho, \nabla \bu)\|_{L_q(\R^N)}
\leq 2 \kappa_0 \|(F(\rho, \bu), G(\rho, \bu))\|_{W^{1, 0}_q(\R^N)}
\end{equation}
for any $\lambda \in \Sigma_{\epsilon, \lambda_0}$.
Combining \eqref{unique1} and \eqref{unique2}, we have
\[
(1 - 2 \kappa_0 C_{\rho_2} M_1) 
\|(\rho, \bu)\|_{W^{3, 2}_q(\R^N)} 
+ (\lambda_0^{1/2} - 2 \kappa_0 C_{\rho_2}) \|(\rho, \nabla \rho, \nabla^2 \rho, \nabla \bu)\|_{L_q(\R^N)}
\leq 0.
\]
Choosing $M_1$ so small that
$1 - 2 \kappa_0 C_{\rho_2} M_1 > 0$ and 
$\lambda_0$ so large that
$\lambda_0^{1/2} - 2 \kappa_0 C_{\rho_2} > 0$,
we have $(\rho, \bu) = (0, 0)$,
which proves the uniqueness.
This completes the proof of Theorem \ref{thm:Rbdd}.

\end{proof}

\subsection{A proof of Theorem \ref{thm:mr}}
To prove Theorem \ref{thm:mr}, the key tool is
the Weis
operator valued Fourier multiplier theorem.
Let $\CD(\BR,X)$ and $\CS(\BR,X)$ be the set of all $X$ 
valued $C^{\infty}$ functions having compact support 
and the Schwartz space of rapidly decreasing $X$ 
valued functions, respectively,
while $\CS'(\BR,X)= \CL(\CS(\BR,\BC),X)$. 
Given $M \in L_{1,\rm{loc}}(\BR \backslash \{0\},X)$, 
we define the operator $T_{M} : \CF^{-1} \CD(\BR,X)\rightarrow \CS'(\BR,Y)$ 
by
\begin{align}\label{eqTM}
T_M \phi=\CF^{-1}[M\CF[\phi]],\quad (\CF[\phi] \in \CD(\BR,X)). 
\end{align}

\begin{thm}[Weis \cite{W}]\label{Weis}
Let $X$ and $Y$ be two UMD Banach spaces and $1 < p < \infty$. 
Let $M$ be a function in $C^{1}(\BR \backslash \{0\}, \CL(X,Y))$ such that 
\begin{align*}
\CR_{\CL(X,Y)} (\{(\tau \frac{d}{d\tau})^{\ell} M(\tau) \mid
 \tau \in \BR \backslash \{0\}\}) \leq \kappa < \infty
\quad (\ell =0,1)
\end{align*}
with some constant $\kappa$. 
Then, the operator $T_{M}$ defined in \eqref{eqTM} 
is extended to a bounded linear operator from
 $L_{p}(\BR,X)$ into $L_{p}(\BR,Y)$. 
Moreover, denoting this extension by $T_{M}$, we have 
\begin{align*}
\|T_{M}\|_{\CL(L_p(\BR,X),L_p(\BR,Y))} \leq C\kappa
\end{align*}
for some positive constant $C$ depending on $p$, $X$ and $Y$. 
\end{thm}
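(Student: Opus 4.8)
The plan is to reduce the assertion to a discrete (``dyadic'') multiplier theorem and then to feed in the vector-valued Littlewood--Paley theory available in UMD spaces. First, by a routine density argument respecting the dyadic scaling of $\BR\setminus\{0\}$---mollify $M$ by convolution in the variable $\log|\tau|$ and multiply by a fixed smooth cut-off, operations under which both hypotheses persist with $\kappa$ replaced by $C\kappa$---one may assume $M\in C^1(\BR\setminus\{0\},\CL(X,Y))$ is supported in a set $\{a\le|\tau|\le b\}$, $0<a<b<\infty$, the bound to be proved being required to depend only on $p$, $X$, $Y$ and not on $a$, $b$. Relabel the dyadic half-blocks $\pm[2^{j-1},2^{j})$ ($j\in\mathbb{Z}$) as a single family $(I_k)_{k\in\mathbb{Z}}$ and let $\Delta_k$ be the Fourier multiplier with symbol $\mathbf{1}_{I_k}$; the $I_k$ are pairwise disjoint, cover $\BR\setminus\{0\}$, and each $\Delta_k$ is bounded on $L_p(\BR,X)$ because $X$ is UMD. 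Under the support hypothesis $T_M=\sum_k T_{M\mathbf{1}_{I_k}}$ is a finite sum, so no convergence issues arise.

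The first key step is the discrete statement: if $(T_k)_k\subset\CL(X,Y)$ satisfies $\CR_{\CL(X,Y)}(\{T_k\})\le\kappa$, then $f\mapsto\sum_k T_k\Delta_k f$ maps $L_p(\BR,X)$ into $L_p(\BR,Y)$ with norm $\le C\kappa$ (and likewise with each $\Delta_k$ replaced by the Fourier restriction onto a sub-interval $J_k\subset I_k$). To prove this I would invoke the two-sided Littlewood--Paley inequality of Bourgain and McConnell, valid in any UMD space $Z$: $\|g\|_{L_p(\BR,Z)}$ is comparable to $\|\sum_k r_k\Delta_k g\|_{L_p(\Omega\times\BR,Z)}$, where $(r_k)$ is a Rademacher sequence on $\Omega=[0,1]$, the upper estimate resting on the uniform $L_p(\BR,Z)$-boundedness of all sign multipliers $\sum_k\epsilon_k\Delta_k$ and the lower estimate following by duality since $Z'$ is again UMD. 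Applying this with $Z=Y$ to $g=\sum_k T_k\Delta_k f$, and noting that $\Delta_j g=T_j\Delta_j f$ (the $I_k$ are disjoint and $T_j$ commutes with scalar Fourier multipliers), one obtains $\|g\|_{L_p(\BR,Y)}\le C\|\sum_j r_j T_j\Delta_j f\|_{L_p(\Omega\times\BR,Y)}$; applying the $\CR$-bound of $\{T_j\}$ to the Rademacher sum pointwise in $x\in\BR$ (the exponent in Definition~\ref{dfn2} being immaterial by Kahane's inequality), and then the Littlewood--Paley inequality once more with $Z=X$, yields the bound $C\kappa$.

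The second step passes from the Mikhlin--Marcinkiewicz hypotheses to this discrete case. On each block write $M(\tau)=M(\tau_k)+\int_{\tau_k}^{\tau}\bigl(sM'(s)\bigr)\,\frac{ds}{s}$ for $\tau\in I_k$, with $\tau_k$ the endpoint of $I_k$ nearest the origin. The endpoint term contributes $\sum_k M(\tau_k)\Delta_k$, and since $\{M(\tau_k)\}_k$ is a sub-family of $\{M(\tau)\mid\tau\neq0\}$, its $\CR$-bound is $\le\kappa$ and the discrete step gives the bound $C\kappa$. For the integral term, the substitution $s=\tau_k e^{r}$ (so $\frac{ds}{s}=dr$, with $r$ in an interval of length $\log 2$) rewrites it, restricted to $\tau\in I_k$, as $\int_0^{\log 2}\nu_k(r)\,\mathbf{1}_{J_k(r)}(\tau)\,dr$, where $\nu_k(r)=(sM'(s))|_{s=\tau_k e^{r}}$ and $J_k(r)\subset I_k$ is a sub-interval depending on $r$; hence this part of $T_M$ equals $\int_0^{\log 2}\bigl(\sum_k\nu_k(r)\,\Delta_{J_k(r)}\bigr)\,dr$, a superposition over a parameter set of measure $\log 2$ of operators each bounded by $C\kappa$ by the sub-interval version of the discrete step, since $\CR(\{\nu_k(r)\}_k)\le\kappa$ for every $r$. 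Minkowski's integral inequality then gives the bound $C\kappa$ for this part as well, and adding the two contributions completes the proof.

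The genuinely hard ingredient is the UMD Littlewood--Paley / Marcinkiewicz input used in the discrete step---the uniform $L_p(\BR,Z)$-boundedness of the sign multipliers $\sum_k\epsilon_k\Delta_k$, and more generally of symbols constant (or of uniformly bounded variation) on each dyadic block---which is precisely where the UMD hypothesis is consumed, via the vector-valued Hilbert transform and a Marcinkiewicz-type iteration (Bourgain \cite{Bourgain}; McConnell); I would cite it as a black box. The rest is bookkeeping: making the density reduction uniform in $a$, $b$, preserving the disjoint-support structure when passing to the sub-intervals $J_k(r)$, and verifying that every $\CR$-bound incurred along the way is $\le C\kappa$ with $C=C(p,X,Y)$.
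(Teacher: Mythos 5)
The paper does not prove Theorem~\ref{Weis}; it is quoted verbatim from Weis \cite{W} and used as a black box, so there is no in-paper argument against which to compare your sketch. On its own merits, your sketch correctly reconstructs the standard proof as it appears in Weis's original paper and in Denk--Hieber--Pr\"u{\ss} \cite{DHP}: reduce by mollification and cut-off in $\log|\tau|$ to compactly supported symbols (the Kahane contraction principle preserving the $\CR$-bounds of both $M$ and $\tau M'$), establish the discrete multiplier theorem $\|\sum_k T_k\Delta_k\|\le C\kappa$ via the two-sided vector-valued Littlewood--Paley inequality of Bourgain/McConnell (upper estimate from uniform boundedness of sign multipliers, lower by UMD duality) followed by the pointwise-in-$x$ application of the $\CR$-bound with exponents equalized by Kahane--Khintchine, and finally convert the Mikhlin hypotheses to the discrete one by writing $M(\tau)=M(\tau_k)+\int_{\tau_k}^{\tau}(sM'(s))\,ds/s$ on each dyadic block and dominating the integral term by a $\log 2$-parametrized superposition of sub-interval discrete multipliers.

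Two places where the bookkeeping deserves the care you flag at the end: for the sub-interval version $\sum_k\nu_k(r)\,\Delta_{J_k(r)}$ with $J_k(r)\subsetneq I_k$, the step $\|\sum_j r_j\Delta_{J_j}f\|_{L_p(\Omega\times\BR,X)}\le C\|f\|_{L_p(\BR,X)}$ is not the raw Littlewood--Paley inequality but requires, for each fixed $\omega$, the vector-valued Marcinkiewicz theorem (uniform $L_p(\BR,X)$-boundedness of scalar symbols of uniformly bounded variation on each dyadic block), which you correctly identify as the genuine UMD input; and the mollification/cut-off step must be accompanied by a convergence argument on a dense subspace so that the uniform bounds for $T_{\chi_n M}$ transfer to $T_M$ itself. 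With those black boxes admitted, the sketch is sound and matches the approach of the cited reference.
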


We now prove Theorem \ref{thm:mr}.
In view of Theorem \ref{thm:semi2}, we prove
the existence of solutions to problem   
\eqref{l0} with $(\rho_0, \bu_0) = (0, 0)$.
Let $(f, \bg) \in L_{p, \delta_0} (\BR_+, W^{1, 0}_q(\BR^N))$.
Let $f_0$ and $\bg_0$ be the zero extension of $f$ and $\bg$
to $t < 0$.
We consider problem:
\begin{equation}\label{l00}
\begin{aligned}
\pd_t \rho + \gamma_2 \dv \bu &= f_0 & \quad&\text{in $\R^N$ for $t \in \BR$}, \\
\gamma_0 \pd_t \bu - \mu_* \Delta \bu -\nu_* \nabla \dv \bu + \nabla(\gamma_1 \rho) 
- \kappa_* \nabla (\gamma_2 \Delta \rho) 
&= \bg_0  & \quad&\text{in $\R^N$ for $t \in \BR$}. 
\end{aligned}
\end{equation}
Let $\CL$ and $\CL^{-1}$ be the Laplace transform and 
its inverse transform.
Let $\CA(\lambda)$ and $\CB(\lambda)$ be the operators 
given in Theorem \ref{thm:Rbdd}.
Then, we have
\begin{align*}
\rho &= \CL^{-1}[\CA(\lambda) (\CL [f_0], \CL[\bg_0])],\\
\bu &=  \CL^{-1}[\CB(\lambda) (\CL [f_0], \CL[\bg_0])].
\end{align*} 
with $\lambda = \delta + i\tau \in \BC$. 
Applying Theorem \ref{thm:Rbdd} and Theorem \ref{Weis},
we see that 
$\rho$ and $\bu$ satisfy the equations \eqref{l00} and  the estimate: 
\begin{equation}\label{mr0}
\begin{aligned}
&\|e^{-\delta t}\pd_t \rho\|_{L_p(\BR, W^1_q(\BR^N))}
+ 
\|e^{-\delta t} \rho\|_{L_p(\BR, W^3_q(\BR^N))}
\\
&+
\|e^{-\delta t}\pd_t \bu\|_{L_p(\BR, L_q(\BR^N))}
\\
&\leq
C_{N, p, q, \delta_0}
\|(e^{-\delta t}f_0, e^{-\delta t}\bg_0)\|_{L_p(\BR, W_q^{1, 0}(\BR^N))}\\
&=
C_{N, p, q, \delta_0}
\|(e^{-\delta t}f, e^{-\delta t}\bg)\|_{L_p(\BR_+, W_q^{1, 0}(\BR^N))}
\end{aligned}
\end{equation}
for any $\delta \geq \delta_0$.

We now prove that $\rho = 0$ and $\bu = 0$ for $t \leq 0$, we consider the dual problem.
Let $T$ be a real number.
By Theorem \ref{thm:semi2}, 
we see that for any $(\theta_0, \bv_0) \in C^\infty_0 (\R^N)^{N+1}$, 
there exists a solution
$(\theta, \bv)$ such that
$$
\left\{
\begin{aligned}
&\pd_t \theta + \gamma_2 \dv \bv = 0 & \quad&\text{in $\R^N$ for $t \in (-T, \infty)$,} \\
&\gamma_0 \pd_t \bv - \mu_* \Delta \bv -\nu_* \nabla \dv \bv + \nabla(\gamma_1 \theta) 
- \kappa_* \nabla (\gamma_2 \Delta \theta) 
= 0  & \quad&\text{in $\R^N$ for $t \in (-T, \infty)$},  \\
&(\theta, \bv)|_{t=-T} = (\theta_0, \bv_0)& \quad&\text{in $\R^N$}.
\end{aligned}\right.
$$
satisfying 
\begin{equation*}
\begin{aligned}
&\|e^{-\delta t}\pd_t \theta\|_{L_p((-T, \infty), W^1_q(\BR^N))}
+
\|e^{-\delta t} \theta\|_{L_p((-T, \infty), W^3_q(\BR^N))}\\
&+
\|e^{-\delta t}\pd_t \bv\|_{L_p((-T, \infty), L_q(\BR^N))}
+
\|e^{-\delta t} \bv\|_{L_p((-T, \infty), W^2_q(\BR^N))}\\
&\leq
C_{N, p, q, \delta_1}
\|(\theta_0, \bv_0)\|_{D_{q, p}(\BR^N)}.
\end{aligned}
\end{equation*}

Setting $\omega(x, t) = \theta (x, -t)$ and $\bw(x, t) = \bv (x, -t)$, 
we see that $(\omega, \bw)$ satisfies
$$
\left\{
\begin{aligned}
&\pd_t \omega - \gamma_2 \dv \bw = 0 & \quad&
\text{in $\R^N$ for $t \in (-T, \infty)$}, \\
&\gamma_0 \pd_t \bw + \mu_* \Delta \bw + \nu_* \nabla \dv \bw - \nabla(\gamma_1 \omega) 
+ \kappa_* \nabla (\gamma_2 \Delta \omega) 
= 0  & \quad&\text{in $\R^N$ for $t \in (-T, \infty)$}, \\
&(\omega, \bw)|_{t=T} = (\theta_0, \bv_0)& \quad&\text{in $\R^N$}
\end{aligned}\right.
$$
satisfying 
\begin{equation}\label{dual0}
\begin{aligned}
&\|e^{\delta t}\pd_t \omega\|_{L_p((-\infty, T), W^1_q(\BR^N))}
+
\|e^{\delta t} \omega\|_{L_p((-\infty, T), W^3_q(\BR^N))}\\
&+
\|e^{\delta t}\pd_t \bw\|_{L_p((-\infty, T), L_q(\BR^N))}
+
\|e^{\delta t} \bw\|_{L_p((-\infty, T), W^2_q(\BR^N))}\\
&\leq
C_{N, p, q, \delta_1}
\|(\theta_0, \bv_0)\|_{D_{q, p}(\BR^N)}.
\end{aligned}
\end{equation}
By integration by parts, we have
\begin{equation}\label{dual1}
\begin{aligned}
&(\bg_0, \bw)_{\R^N \times (-\infty, T)} - (\gamma_1 \gamma_2^{-1}f_0, \omega)_{\R^N \times (-\infty, T)}
+ (\kappa_* f_0, \Delta \omega)_{\R^N \times (-\infty, T)}\\
&= (\gamma_0 \pd_t \bu - \mu_* \Delta \bu -\nu_* \nabla \dv \bu + \nabla(\gamma_1 \rho) 
- \kappa_* \nabla (\gamma_2 \Delta \rho), \bw)_{\R^N \times (-\infty, T)}\\
&\enskip - (\gamma_1 \gamma_2^{-1} (\pd_t \rho + \gamma_2 \dv \bu), \omega)_{\R^N \times (-\infty, T)}
+(\kappa_* (\pd_t \rho + \gamma_2 \dv \bu), \Delta \omega)_{\R^N \times (-\infty, T)}\\
&=
(\gamma_0 \bu(T), \bw(T))_{\R^N} 
- (\gamma_1 \gamma_2^{-1} \rho(T), \omega(T))_{\R^N}
+ (\kappa_* \rho(T), \Delta \omega(T))_{\R^N}\\
& \enskip
-(\bu, \gamma_0 \pd_t \bw + \mu_* \Delta \bw +\nu_* \nabla \dv \bw)
_{\R^N \times (-\infty, T)} - (\rho, \gamma_1 \dv \bw)_{\R^N \times (-\infty, T)}\\
& \enskip
+ \kappa_* ( \rho, \Delta (\gamma_2 \dv \bw))_{\R^N\times (-\infty, T)}
+ (\rho, \pd_t (\gamma_1 \gamma_2^{-1} \omega))_{\R^N\times (-\infty, T)}
+ (\bu, \nabla (\gamma_1 \omega))_{\R^N \times (-\infty, T)}\\
& \enskip
- \kappa_* (\rho, \pd_t (\Delta \omega))_{\R^N \times (-\infty, T)}
- \kappa_* (\bu, \nabla(\gamma_2 \Delta \omega))_{\R^N \times (-\infty, T)} \\
&=
(\gamma_0 \bu (T), \bv_0)_{\R^N} 
- (\gamma_1 \gamma_2^{-1} \rho (T), \theta_0)_{\R^N}
+ (\kappa_* \rho (T), \Delta \theta_0)_{\R^N}\\
& \enskip
-(\bu, \gamma_0 \pd_t \bw + \mu_* \Delta \bw +\nu_* \nabla \dv \bw
- \nabla (\gamma_1 \omega) + \kappa_* \nabla (\gamma_2 \Delta \omega))
_{\R^N \times (-\infty, T)} \\
& \enskip
+(\rho, \gamma_1 \gamma_2^{-1} (\pd_t \omega - \gamma_2 \dv \bw))_{\R^N \times (-\infty, T)}
+ \kappa_* (\rho, \Delta (\pd_t \omega - \gamma_2 \dv \bw))_{\R^N \times (-\infty, T)}
\\
&= (\gamma_0 \bu (T), \bv_0)_{\R^N} 
- (\gamma_1 \gamma_2^{-1} \rho (T), \theta_0)_{\R^N}
+ (\kappa_* \rho (T), \Delta \theta_0)_{\R^N}.
\end{aligned}
\end{equation}
Let $T$ be any negative number.
Since $f_0 = 0$, $\bg_0 = 0$ for $t < 0$, we have 
\[
(\gamma_0 \bu (T), \bv_0)_{\R^N} 
- (\gamma_1 \gamma_2^{-1} \rho (T), \theta_0)_{\R^N}
+ (\kappa_* \rho (T), \Delta \theta_0)_{\R^N} = 0.
\]
Choosing $\bv_0$ and $\theta_0$ arbitrarily, 
we see that $(\rho(T), \bu(T)) = (0, 0)$ for any $T \leq 0$. 
Finally, $(\rho, \bu) + e^{\CA t}(\rho_0, \bu_0)$ is a solution of 
equations \eqref{l0}, which completes the existence proof.

Finally, we show the uniqueness of solutions.  Let
$\rho$ and $\bu$ satisfy the equation \eqref{l00} with $f_0 = 0$, $\bg_0 = 0$.
By \eqref{dual1}, we have 
$(\gamma_0 \bu (T), \bv_0)_{\R^N} 
- (\gamma_1 \gamma_2^{-1} \rho (T), \theta_0)_{\R^N}
+ (\kappa_* \rho (T), \Delta \theta_0)_{\R^N} = 0$
for any $(\theta_0, \bv_0) \in C^\infty_0 (\R^N)^{N+1}$ and
$T \in \R$, which implies that $(\rho (T), \bu (T)) = (0, 0)$
for any $T \in \R$.
This completes the proof of Theorem \ref{thm:mr}.



\section{Local well-posedness for \eqref{nsk}}
This section is devoted to proving the local wellposedness stated
as follows.
\begin{thm}\label{local}
Let $1 < p, q < \infty$, $2/p + N/q < 1$ and $R > 0$. 
Then, there exists a time $T$ depending on $R$ such that
for any initial data $(\rho_0, \bu_0) 
\in D_{q, p} (\BR^N)$ 
with  
$\|(\rho_0, \bu_0)\|
_{D_{q, p} (\BR^N)}
 \leq R$  
 satisfying the range condition
\eqref{initial},
problem \eqref{nsk} 
admits a unique solution $(\rho, \bu)$ with $\rho = \rho_* + \theta$
and
$(\theta, \bu) \in X_{p, q, T}$.
\end{thm}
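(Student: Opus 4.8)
The plan is to reformulate \eqref{nsk} as a fixed-point problem around the solution of the linearized system \eqref{l0} and to invoke the maximal $L_p$-$L_q$ regularity established in Theorem \ref{thm:mr}. First I would set $\rho = \rho_* + \theta$ and rewrite \eqref{nsk} with the left-hand side being the linear operator of \eqref{l0}, choosing the coefficients $\gamma_0 = \rho_*$, $\gamma_1 = P'(\rho_*)$, $\gamma_2 = \rho_*$ (so that Assumption \ref{assumption} holds with room to spare for small $\theta$), and with the nonlinear remainder collected on the right-hand side. Explicitly, the continuity equation gives $\pd_t\theta + \rho_*\dv\bu = f(\theta,\bu)$ with $f = -\dv(\theta\bu)$, and the momentum equation gives $\rho_*\pd_t\bu - \mu_*\Delta\bu - \nu_*\nabla\dv\bu + \nabla(P'(\rho_*)\theta) - \kappa_*\rho_*\nabla\Delta\theta = \bg(\theta,\bu)$, where $\bg$ collects the quadratic-and-higher terms: the convective term $-\theta(\pd_t\bu + \bu\cdot\nabla\bu) - \rho_*\bu\cdot\nabla\bu$, the pressure nonlinearity $-\nabla(P(\rho_*+\theta) - P(\rho_*) - P'(\rho_*)\theta)$, the variable-coefficient viscous corrections, and the nonlinear part of the Korteweg tensor $\DV\bK(\rho) + \kappa_*\rho_*\nabla\Delta\theta$.

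Next I would define, for fixed $T$ and $R$, the complete metric space $\CB_{R,T} = \{(\theta,\bu) \in X_{p,q,T} : (\theta,\bu)|_{t=0} = (\rho_0,\bu_0),\ \|(\theta,\bu)\|_{X_{p,q,T}} \leq C_0 R,\ \rho_*/4 \leq \rho_*+\theta \leq 4\rho_*\}$ with $C_0$ the constant from \eqref{mr} (applied on the finite interval $(0,T)$, which is legitimate since $\delta_0$-weights are harmless on bounded time intervals). The map $\Phi$ sends $(\bar\theta,\bar\bu) \in \CB_{R,T}$ to the solution $(\theta,\bu)$ of \eqref{l0} with data $(\rho_0,\bu_0)$ and right-hand side $(f(\bar\theta,\bar\bu), \bg(\bar\theta,\bar\bu))$; this is well-defined by Theorem \ref{thm:mr} provided I check that $(f,\bg) \in L_p((0,T), W^{1,0}_q(\BR^N))$. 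The key step is then the pair of estimates: first a self-map estimate $\|\Phi(\bar\theta,\bar\bu)\|_{X_{p,q,T}} \leq C_0 R + C(R)\,\omega(T)$ where $\omega(T) \to 0$ as $T \to 0$, obtained by bounding each nonlinear term by a product of $X_{p,q,T}$-norms times a positive power of $T$ coming from Hölder in time, using the embedding $2/p + N/q < 1$ so that $\bu$ and $\nabla\theta$, etc., lie in $L_\infty((0,T), L_\infty(\BR^N))$ or similar; second a contraction estimate $\|\Phi(\bar\theta_1,\bar\bu_1) - \Phi(\bar\theta_2,\bar\bu_2)\|_{X_{p,q,T}} \leq C(R)\,\omega(T)\,\|(\bar\theta_1,\bar\bu_1) - (\bar\theta_2,\bar\bu_2)\|_{X_{p,q,T}}$, using that the nonlinear terms are (at worst) quadratic so their differences are controlled by differences times bounded factors. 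Choosing $T$ small enough that $C(R)\,\omega(T) \leq \min(C_0R, 1/2)$ and that the range condition $\rho_*/4 \leq \rho_*+\theta \leq 4\rho_*$ is preserved (the latter because $\theta(t) = \rho_0 + \int_0^t \pd_s\theta\,ds$ and $\pd_t\theta \in L_p((0,T),W^1_q) \hookrightarrow L_p((0,T),L_\infty)$, so $\|\theta(t) - \rho_0\|_{L_\infty} \leq T^{1-1/p}\|\pd_t\theta\|_{L_p(L_\infty)}$ is small) yields a contraction; its unique fixed point is the desired local solution.

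The main obstacle I expect is the careful bookkeeping of the nonlinear estimates in the correct norms — in particular, verifying that every term in $\bg$ actually lands in $L_p((0,T), W^1_q)$ rather than merely $L_p((0,T), L_q)$. The Korteweg term $\DV\bK(\rho)$ involves third derivatives of $\rho$ (through $\nabla\Delta(\rho^2)$ and $\nabla\dv(\nabla\rho\otimes\nabla\rho)$), so after subtracting the linear part one must see that the remaining terms are genuinely quadratic, of the form (first or second derivative of $\theta$)$\times$(second or third derivative of $\theta$), and then estimate a product like $\nabla^2\theta\cdot\nabla^3\theta$ in $W^1_q$: this requires $\nabla^2\theta \in L_\infty$, which follows from $W^3_q \hookrightarrow W^2_\infty$ under $N/q < 1$, combined with $\nabla^3\theta \in L_q$ from the $L_p((0,T),W^3_q)$ membership, plus a time-Hölder factor $T^{1/p'}$ to absorb the $L_p$-in-time loss. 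The pressure term needs a Taylor expansion of $P$ with $C^\infty$ remainder and the range condition to keep $\rho$ bounded away from $0$. The convective term $\theta\pd_t\bu$ requires $\theta \in L_\infty((0,T)\times\BR^N)$ (from $X_{p,q,T} \hookrightarrow C([0,T],B^{3-2/p}_{q,p}) \hookrightarrow C([0,T],L_\infty)$ since $3 - 2/p > N/q$) multiplied by $\pd_t\bu \in L_p((0,T),L_q)$; that this product is actually in $W^1_q$ in space needs $\nabla\theta \in L_\infty$ and $\pd_t\bu \in W^1_q$, which is \emph{not} available — so one must instead keep $\theta\pd_t\bu$ only in $L_q$ and note that the momentum right-hand side is required in $L_q$, not $W^1_q$, which matches the space $W^{1,0}_q$ in Theorem \ref{thm:mr}. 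This asymmetry — the first component of the right-hand side needs $W^1_q$ regularity while the second needs only $L_q$ — is exactly the technical point to get right, and it is the reason the continuity-equation nonlinearity $-\dv(\theta\bu)$ is written in divergence form so that its $W^1_q$ norm is controlled by $\|\theta\bu\|_{W^2_q} \lesssim \|\theta\|_{W^3_q}\|\bu\|_{W^2_q} \cdot (\text{embeddings})$ together with a $T$-power.
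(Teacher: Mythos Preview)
Your scheme has a genuine gap: linearizing at the constant state $\rho_*$ (i.e., taking $\gamma_0=\gamma_2=\rho_*$) does not produce a contraction for arbitrary $R>0$. Several of the nonlinear remainders carry a factor of $\theta$ itself, not of $\theta-\rho_0$. For instance, the momentum equation leaves the term $-\theta\,\pd_t\bu$ in $\bg$, and the Korteweg tensor leaves $\kappa_*\theta\,\nabla\Delta\theta$ (a direct computation gives $\DV\bK(\rho)=\kappa_*\rho\,\nabla\Delta\rho$). Estimating
\[
\|\theta\,\pd_t\bu\|_{L_p((0,T),L_q(\BR^N))}\le\|\theta\|_{L_\infty((0,T)\times\BR^N)}\|\pd_t\bu\|_{L_p((0,T),L_q(\BR^N))}
\]
yields a bound of order $R\cdot E_{p,q}(\bar\theta,\bar\bu)(T)$ with \emph{no} positive power of $T$: the factor $\|\theta\|_{L_\infty}$ is already of size $\|\rho_0\|_{L_\infty}\le CR$ at $t=0$, while $\pd_t\bu$ is measured in the very $L_p$-in-time norm you are trying to control, so H\"older buys nothing. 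The same occurs with $\theta\,\nabla\Delta\theta$ in $L_q$, and with $\theta\,\nabla^2\bu$ when you put the continuity remainder $-\dv(\theta\bu)$ into $W^1_q$. Consequently your self-map bound has the form $C_0R + CR\cdot L + \omega(T)$, which can be made $\le L$ only when $R$ is small; the claimed factor $C(R)\,\omega(T)$ in front of every nonlinear term is not available.

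The fix---and this is precisely what the paper does---is to linearize at the \emph{variable} state $(\rho_*+\rho_0(x),0)$, i.e., take $\gamma_0(x)=\gamma_2(x)=\rho_*+\rho_0(x)$ in Theorem \ref{thm:mr}. (This is the reason Assumption \ref{assumption} is stated for non-constant $\gamma_k$, and the range condition \eqref{initial} supplies the required lower bound.) Then every occurrence of $\theta$ multiplying a top-order factor becomes $\theta-\rho_0=\int_0^t\pd_s\theta\,ds$, which carries the missing factor $T^{1/p'}$ via H\"older in time together with $\pd_s\theta\in L_p((0,T),W^1_q(\BR^N))\hookrightarrow L_p((0,T),L_\infty(\BR^N))$. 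With that modification, the remainder of your outline---the fixed-point argument on a ball of radius $L$, the product estimates exploiting $2/p+N/q<1$ through Lemma \ref{sup}, and the preservation of the range condition---matches the paper's proof.
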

To prove Theorem \ref{local} we linearize nonlinear problem \eqref{nsk} at $(\rho_*+\rho_0(x), 0)$,
and then we have the equations:
\begin{equation}\label{nsk2}\left\{
\begin{aligned}
&\pd_t \theta + (\rho_* + \rho_0 (x)) \dv \bu = f (\theta, \bu) & \quad&\text{in $\R^N$
for $t \in (0, T)$} \\
&(\rho_* + \rho_0 (x)) \pd_t \bu - \mu_* \Delta \bu -\nu_* \nabla \dv \bu\\
&\qquad + P'(\rho_*) \nabla \theta 
- \kappa_* \nabla ((\rho_* + \rho_0 (x)) \Delta \theta) 
= \bg (\theta, \bu)  & \quad&\text{in $\R^N$ for $t \in (0, T)$}, \\
&(\theta, \bu)|_{t=0} = (\rho_0, \bu_0)& \quad&\text{in $\R^N$},
\end{aligned}\right.
\end {equation}
where
\begin{align*}
f (\theta, \bu) = & - \int^t_0 \pd_s \theta \, ds \dv \bu - \bu \cdot \nabla \theta,\\
\bg (\theta, \bu)  = & -\int^t_0 \pd_s \theta\, ds \pd_t \bu - (\rho_* + \theta) \bu \cdot \nabla \bu 
- \nabla \left(\int^1_0 P''(\rho_* + \tau \theta)(1-\tau) \,d\tau \theta^2 \right)\\
&+\nabla \left(\kappa_* \int^t_0 \pd_s \theta \,ds \Delta \theta \right) 
+ \kappa_* \DV \left( \frac{1}{2} |\nabla \theta|^2 \bI - \nabla \theta \otimes \nabla \theta \right).
\end{align*}
To solve problem \eqref{nsk2} 
in the maximal regularity class, 
we consider the following time local linear problem:
\begin{equation}\label{l2}
\left\{
\begin{aligned}
&\pd_t \rho + \gamma_2 \dv \bu = f & \quad&\text{in $\R^N$ for $t \in (0, T)$}, \\
&\gamma_0 \pd_t \bu - \mu_* \Delta \bu 
-\nu_* \nabla \dv \bu + \nabla(\gamma_1 \rho) 
- \kappa_* \nabla (\gamma_2 \Delta \rho) 
= \bg  & \quad&\text{in $\R^N$ for $t \in (0, T)$}, \\
&(\rho, \bu)|_{t=0} = (\rho_0, \bu_0)& \quad&\text{in $\R^N$}.
\end{aligned}\right.
\end {equation}

If we extend $f$ and $\bg$
by zero outside of $(0, T)$, by
Theorem \ref{thm:mr} and the uniquness of solutions, we have 
the following result.

\begin{thm}\label{lmr}
Let $T, R > 0$, $1 < p, q < \infty$ and 
suppose that Assumption \ref{assumption} holds. 
Then, there exists a constant $\delta_0 \geq 1$
such that the following assertion holds:
For any initial data $(\rho_0, \bu_0) \in D_{q, p} (\BR^N)$
with $\|(\rho_0, \bu_0)\|_{D_{q, p}(\BR^N)} \leq R$
satisfying the range condition:
\begin{equation}\label{initial}
\rho_*/2 < \rho_* + \rho_0 (x) <
2\rho_* \quad (x~{\rm \in}~\BR^N),
\end{equation}
and right members 
$(f, \bg) \in L_p((0, T), W_q^{1, 0}(\BR^N))$,
problem \eqref{l2} admits 
a unique solution $(\rho, \bu) \in X_{p, q, T}$
possessing the estimate 
\begin{equation}\label{mr1}
E_{p, q}(\rho, \bu)(t)
\leq
C_{p, q, N, \delta_0, R} e^{\delta t}
\left(\|(\rho, \bu_0)\|_{D_{q, p} (\BR^N)}
+\|(f, \bg)\|_{L_p((0, t), W_q^{1, 0}(\BR^N))}\right)
\end{equation}
for any $t \in (0, T]$ and $\delta \geq \delta_0$, 
where we set 
\begin{equation*}
\begin{aligned}
E_{p, q}(\rho, \bu) (t)&=\|\pd_s \rho\|_{L_p((0, t), W^1_q(\BR^N))}
+
\|\rho\|_{L_p((0, t), W^3_q(\BR^N))}\\
&+
\|\pd_s \bu\|_{L_p((0, t), L_q(\BR^N)^N)}
+
\|\bu\|_{L_p((0, t), W^2_q(\BR^N)^N)},
\end{aligned}
\end{equation*}
and constant $C_{p, q, N, \delta_0, R}$ is independent of $\delta$ and $t$.

\end{thm}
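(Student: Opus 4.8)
The plan is to reduce Theorem~\ref{lmr} to the half-line result Theorem~\ref{thm:mr} by a zero-extension argument in time, exactly as suggested in the sentence preceding the statement. First I would extend the right members $f$ and $\bg$ by zero to $t > T$ (and by zero to $t < 0$, where they already vanish), so that the extended data $(\tilde f, \tilde\bg)$ belong to $L_{p,\delta_0}(\BR_+, W^{1,0}_q(\BR^N))$ with
\[
\|(e^{-\delta t}\tilde f, e^{-\delta t}\tilde\bg)\|_{L_p(\BR_+, W^{1,0}_q(\BR^N))}
\le e^{\delta T}\,\|(f,\bg)\|_{L_p((0,T), W^{1,0}_q(\BR^N))}
\le C\,e^{\delta T}\,\|(f,\bg)\|_{L_p((0,T), W^{1,0}_q(\BR^N))},
\]
since the time interval is bounded. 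Here I must first check that Assumption~\ref{assumption} is met by the coefficients of \eqref{l2}: with $\gamma_0 = \gamma_0(x)$, etc., the range condition \eqref{initial} together with $\|(\rho_0,\bu_0)\|_{D_{q,p}(\BR^N)}\le R$ and the embedding $B^{3-2/p}_{q,p}(\BR^N)\hookrightarrow C^1(\BR^N)$ (valid because $2/p + N/q < 1$ forces $3 - 2/p - N/q > 1$, wait, more simply $3-2/p > N/q$) gives the uniform bounds $\rho_1 \le \gamma_k(x) \le \rho_2$ and $|\nabla\gamma_k(x)|\le\rho_2$ with constants $\rho_1,\rho_2$ depending only on $\rho_*$ and $R$. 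This is why the constant in \eqref{mr1} is allowed to depend on $R$.

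Next, applying Theorem~\ref{thm:mr} to \eqref{l0} with data $(\rho_0,\bu_0)$ and right members $(\tilde f,\tilde\bg)$, I obtain a unique global solution $(\tilde\rho,\tilde\bu)$ on $\BR_+$ in the class $W^1_{p,\delta_0}(\BR_+, W^1_q)\cap L_{p,\delta_0}(\BR_+, W^3_q)$ for $\rho$ and $W^1_{p,\delta_0}(\BR_+, L_q)\cap L_{p,\delta_0}(\BR_+, W^2_q)$ for $\bu$, satisfying \eqref{mr}. Its restriction to $(0,T)$ solves \eqref{l2} on $(0,T)$, since $\tilde f = f$ and $\tilde\bg = \bg$ there. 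To pass from the weighted $L_{p,\delta_0}(\BR_+,\cdot)$ norm in \eqref{mr} to the unweighted $L_p((0,t),\cdot)$ norm in \eqref{mr1}, I use that on the bounded interval $(0,t)$, $e^{-\delta t'}$ is bounded below by $e^{-\delta t}$, so for any $X$-valued $h$,
\[
\|h\|_{L_p((0,t),X)} \le e^{\delta t}\,\|e^{-\delta t'}h\|_{L_p((0,t),X)} \le e^{\delta t}\,\|e^{-\delta t'}h\|_{L_p(\BR_+,X)};
\]
applying this to $\pd_s\rho,\rho,\pd_s\bu,\bu$ and combining with \eqref{mr} and the bound on the extended right members yields \eqref{mr1} with the stated dependence of the constant. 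Finally, the range condition $\rho_*/4 \le \rho_* + \theta(t,x) \le 4\rho_*$ defining $X_{p,q,T}$ follows, after possibly shrinking $T$ depending on $R$, from $\theta(0,\cdot) = \rho_0$ together with \eqref{initial} and the time-continuity $\theta \in C([0,T], B^{2(1-1/p)}_{q,p}(\BR^N)) \hookrightarrow C([0,T]\times\BR^N)$ that the maximal regularity class provides, since $\|\theta(t,\cdot) - \rho_0\|_{L_\infty}$ is controlled by $t^{1/p'}$ times $E_{p,q}(\rho,\bu)(t)$.

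For uniqueness: if $(\rho_1,\bu_1)$ and $(\rho_2,\bu_2)$ are two solutions of \eqref{l2} in $X_{p,q,T}$ with the same data, their difference solves \eqref{l2} on $(0,T)$ with zero initial data and zero right members. Extending the difference by zero to $t < 0$ produces a solution of \eqref{l00}-type equations on $(-\infty, T)$ with vanishing data, and the duality argument carried out in the proof of Theorem~\ref{thm:mr} — testing against the backward solution $(\omega,\bw)$ from arbitrary $(\theta_0,\bv_0) \in C^\infty_0(\BR^N)^{N+1}$ and using \eqref{dual1} — shows the difference vanishes on $(-\infty, T)$, hence on $(0,T)$. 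I expect the only genuinely delicate point to be the bookkeeping that makes the constant in \eqref{mr1} depend on $R$ in the right way: one must verify that $\rho_1,\rho_2$ in Assumption~\ref{assumption} can be chosen as functions of $\rho_*$ and $R$ alone (using the embedding $D_{q,p}(\BR^N) \hookrightarrow C^1 \times C^0$ available under $2/p + N/q < 1$), and that the $T$-dependence is absorbed into the $e^{\delta t}$ factor rather than hidden in $C_{p,q,N,\delta_0,R}$; everything else is a routine transcription of Theorem~\ref{thm:mr}.
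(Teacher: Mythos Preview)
Your approach is exactly the one the paper takes: the paper's entire proof is the sentence ``If we extend $f$ and $\bg$ by zero outside of $(0, T)$, by Theorem~\ref{thm:mr} and the uniqueness of solutions, we have the following result,'' and you have correctly unpacked the zero-extension, the application of Theorem~\ref{thm:mr}, the conversion from weighted to unweighted norms via the factor $e^{\delta t}$, and the uniqueness via the duality argument of \eqref{dual1}. Two small remarks: Assumption~\ref{assumption} is an explicit hypothesis of Theorem~\ref{lmr}, so you need not rederive it from the embedding (the $R$-dependence of the constant enters only because, in the application to \eqref{nsk2}, the bounds $\rho_1,\rho_2$ of Assumption~\ref{assumption} are chosen in terms of $R$); and you are right to flag the range condition in $X_{p,q,T}$ as a point requiring care---for the linear problem with arbitrary $(f,\bg)$ it is not automatic, and in the paper it is effectively verified only in the nonlinear iteration (cf.\ \eqref{sup3}), not as part of Theorem~\ref{lmr} itself.
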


To prove Theorem \ref{local}, we use the following Lemma.

\begin{lem}\label{sup}
Let $\bu \in W^1_p((0, T), L_q(\BR^N)^N) \cap L_p((0, T), W^2_q(\BR^N)^N)$
and
$\rho \in W^1_p((0, T), W^1_q(\BR^N)) \cap L_p((0, T), W^3_q(\BR^N))$,
with $2 < p < \infty$, $1 < q < \infty$ and $T > 0$.
Then,
\begin{equation}\label{supq}
\sup_{0 < s <T} \|(\rho (\cdot, s), \bu (\cdot, s))\|_{D_{q, p}(\BR^N)}
\leq C\{\|(\rho (\cdot, 0), \bu (\cdot, 0))\|_{D_{q, p}(\BR^N)} + E_{p, q} (\rho, \bu) (T)\}
\end{equation}
with the constant $C$ independent of $T$.

If we assume that
$2/p + N/q <1$ in addition, then
\begin{equation}\label{supinfty}
\sup_{0 < s <S} \|(\rho (\cdot, s), \bu (\cdot, s))\|_{W^{2, 1}_\infty(\BR^N)}
\leq C\{\|(\rho (\cdot, 0), \bu (\cdot, 0))\|_{D_{q, p}(\BR^N)} + E_{p, q} (\rho, \bu) (S)\}
\end{equation}
for any $S \in (0, T)$ with the constant $C$ independent of $S$ and $T$.
\end{lem}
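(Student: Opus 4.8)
The plan is to prove the two estimates by combining standard trace theory for parabolic-type spaces with Sobolev embeddings in the time variable. For \eqref{supq}, I would first recall the general trace fact: if $\bu \in W^1_p((0,T),L_q) \cap L_p((0,T),W^2_q)$ then $\bu \in C([0,T], (L_q, W^2_q)_{1-1/p,p}) = C([0,T], B^{2(1-1/p)}_{q,p})$, and likewise if $\rho \in W^1_p((0,T),W^1_q) \cap L_p((0,T),W^3_q)$ then $\rho \in C([0,T], (W^1_q, W^3_q)_{1-1/p,p}) = C([0,T], B^{1+2(1-1/p)}_{q,p}) = C([0,T], B^{3-2/p}_{q,p})$. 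Thus $(\rho,\bu) \in C([0,T], D_{q,p}(\BR^N))$. The only subtlety is the $T$-independence of the constant; the clean way to get this is to extend $(\rho,\bu)$ to $(0,\infty)$ (e.g.\ by a bounded extension operator in the time variable, or by reflection followed by cutoff) in such a way that the extension $(\tilde\rho,\tilde\bu)$ has $E_{p,q}(\tilde\rho,\tilde\bu)(\infty) \leq C(\|(\rho(\cdot,0),\bu(\cdot,0))\|_{D_{q,p}} + E_{p,q}(\rho,\bu)(T))$ with $C$ independent of $T$, and then apply the trace estimate on $(0,\infty)$, where it is scale-independent. Taking the supremum over $0<s<T$ and restricting back yields \eqref{supq}.

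For \eqref{supinfty}, under the additional assumption $2/p + N/q < 1$ I would use the space embedding $B^{2(1-1/p)}_{q,p}(\BR^N) \hookrightarrow W^1_\infty(\BR^N)$ and $B^{3-2/p}_{q,p}(\BR^N) \hookrightarrow W^2_\infty(\BR^N)$, which hold precisely because $2(1-1/p) - N/q > 1$ and $3 - 2/p - N/q > 2$ are both equivalent to $2/p + N/q < 1$. Combining this with the $D_{q,p}$-bound from \eqref{supq} (applied on the interval $(0,S)$, which is legitimate since $S < T$ and the constant in \eqref{supq} does not depend on the length of the interval) gives
\[
\sup_{0<s<S}\|(\rho(\cdot,s),\bu(\cdot,s))\|_{W^{2,1}_\infty(\BR^N)}
\leq C \sup_{0<s<S}\|(\rho(\cdot,s),\bu(\cdot,s))\|_{D_{q,p}(\BR^N)}
\leq C\{\|(\rho(\cdot,0),\bu(\cdot,0))\|_{D_{q,p}(\BR^N)} + E_{p,q}(\rho,\bu)(S)\},
\]
with $C$ independent of $S$ and $T$, which is exactly \eqref{supinfty}.

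I expect the main obstacle to be the $T$-independence of the constant in \eqref{supq}, rather than the embeddings themselves. If one naively invokes the trace theorem on $(0,T)$ directly, the standard form of the inequality carries a constant that may blow up as $T \to 0$ (because the interpolation-space norm is usually stated with an extra lower-order term). The remedy is the extension argument sketched above: one must construct an extension operator $L_p((0,T),W^k_q) \cap W^1_p((0,T),W^{k-2}_q) \to L_p((0,\infty),W^k_q) \cap W^1_p((0,\infty),W^{k-2}_q)$ that preserves the initial trace and whose operator norm is bounded uniformly in $T$; a concrete choice is a reflection across $t=0$ composed with a time-cutoff $\chi(t)$ supported in $(-\infty, 2T)$, or simply the extension by the solution of the corresponding homogeneous problem on $(T,\infty)$ with data $(\rho(\cdot,T),\bu(\cdot,T))$ via Theorem \ref{thm:semi2}. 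Once the extension has $T$-uniform norm, the remaining steps are routine. I would also remark that the hypotheses $2<p<\infty$ guarantee $2(1-1/p) > 1$, so that the target spaces $B^{2(1-1/p)}_{q,p}$ and $B^{3-2/p}_{q,p}$ have positive smoothness and the traces make classical sense.
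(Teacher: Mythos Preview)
Your proposal is correct and follows essentially the same route as the paper: for \eqref{supq} the paper also invokes the real interpolation/trace characterization (citing \cite{SS0} for the $T$-independent version), and for \eqref{supinfty} it uses exactly the embeddings $B^{2(1-1/p)}_{q,p}(\BR^N)\hookrightarrow W^1_\infty(\BR^N)$ and $B^{3-2/p}_{q,p}(\BR^N)\hookrightarrow W^2_\infty(\BR^N)$ under $2/p+N/q<1$ together with \eqref{supq}. Your additional discussion of how to secure the $T$-independence via an extension argument is precisely the content hidden behind the paper's reference to \cite{SS0}.
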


\begin{proof}
Employing the same argument as in the proof of Lemma 1 in \cite{SS0}, 
we see that inequality \eqref{supq} follows from real interpolation theorem.
By $2/p + N/q < 1$, we see that
$B^{2(1 - 1/p)}_{q, p} (\BR^N)$ and $B^{3 - 2/p}_{q, p} (\BR^N)$
are continuously  imbedded into $W^1_\infty (\BR^N)$ and $W^2_\infty (\BR^N)$,
respectively, and so by \eqref{supq}, we have \eqref{supinfty}.
\end{proof}

{\bf Proof of Theorem \ref{local}}.   
Let $T$ and $L$ be two positive numbers determined later
and let $\CI_{L, T}$ be a space defined by
setting 
\begin{equation}\label{sp}
\CI_{L, T}=\{(\theta, \bu) \in X_{p, q, T} \mid (\theta, \bu)|_{t=0} = (\rho_0, \bu_0), E_{p, q}(\theta, \bu) (T) \leq L\}.
\end{equation}
Given $(\omega, \bv) \in \CI_{L, T}$, let $\theta$ and $\bu$ be solutions to the following problem:
\begin{equation}\label{nsk3}\left\{
\begin{aligned}
&\pd_t \theta + (\rho_* + \rho_0 (x)) \dv \bu = f (\omega, \bv) & \quad&
\text{in $\R^N$ for $t \in (0, T)$},  \\
&(\rho_* + \rho_0 (x)) \pd_t \bu - \mu_* \Delta \bu -\nu_* \nabla \dv \bu
\\
&\qquad+ P'(\rho_*) \nabla \theta 
- \kappa_* \nabla ((\rho_* + \rho_0 (x)) \Delta \theta) 
= \bg (\omega, \bv)  & \quad&\text{in $\R^N$ for $t \in (0, T)$}, \\
&(\theta, \bu)|_{t=0} = (\rho_0, \bu_0)& \quad&\text{in $\R^N$}.
\end{aligned}\right.
\end {equation}
We first consider the estimate for the right-hand sides of \eqref{nsk3}.
Since $2(1 - 1/p) > 1$ by Lemma \ref{sup}, we have
\begin{equation}\label{sup1}
\sup_{t \in (0, T)} \|\bv(\cdot, t)\|_{W^1_q (\BR^N)} \leq C(\|\bu_0\|_{B^{2(1 - 1/p)}_{q, p} (\BR^N))} + E_{p, q}(\omega, \bv)(T)) 
\leq C(R + L)
\end{equation}
where $C$ is a constant independent of $T$.
Moreover, we have $B^{3 - 2/p}_{q, p} (\BR^N) \subset W^2_q (\BR^N)$,
and then
\begin{equation}\label{sup2}
\sup_{t \in (0, T)} \|\omega(\cdot, t)\|_{W^2_q (\BR^N)} \leq C(\|\rho_0\|_{B^{3 - 2/p}_{q, p} (\BR^N)} + E_{p, q}(\omega, \bv)(T))
\leq C(R + L).
\end{equation}
Since  
\begin{equation}\label{embedd1}
W^1_q (\BR^N) \subset L_\infty (\BR^N)
\end{equation}
as follows from the assumption $N< q<\infty$, 
by \eqref{initial} and H\"older's inequality, we have 
\begin{align}\label{sup3}
\sup_{t \in (0, T)} \|\omega(\cdot, t)\|_{L_\infty (\BR^N)}
&= \sup_{t \in (0, T)} \|\int^t_0 \pd_s \omega(\cdot, s)\, ds + \rho_0 \|_{L_\infty}
\leq C T^{1/p'}L + \frac{\rho_*}{2}.
\end{align}
Choosing $T$ so small that $C T^{1/p'}L \leq \rho_*/4$, 
we have $\rho_*/4 \leq \rho_* + \tau \omega \leq 7 \rho_*/4$ $(\tau \in [0, 1])$, so that
\begin{align}\label{sup4}
&\sup_{t \in (0, T)} \|\nabla \int^1_0 P''(\rho_* + \tau \omega) (1 - \tau)\, d\tau \|_{L_\infty(\BR^N)}\nonumber \\
&\leq \sup_{t \in (0, T)} \|\nabla \int^1_0 P'''(\rho_* + \tau \omega) (1 - \tau) \, d\tau 
\nabla \omega (\cdot, t)\|_{L_\infty(\BR^N)}\nonumber \\
&\leq C \sup_{t \in (0, T)} \|\nabla \omega (\cdot, t)\|_{L_\infty(\BR^N)}
\leq C(R + L).
\end{align}
By \eqref{supq}, \eqref{embedd1}, \eqref{sup4} and H\"older's inequality, we have
\begin{align}\label{f}
&\|f (\omega, \bv)\|_{L_p((0, T), W^1_q(\BR^N))} \leq C \{T^{1/p} (R + L)^2 + T^{1/p'}L^2\},\\
&\|\bg (\omega, \bv)\|_{L_p((0, T), L_q(\BR^N))} \leq C \{T^{1/p} (R + L)^2 + T^{1/p} (R + L)^3 + T^{1/p'}L^2\},\label{g}
\end{align}
where $C$ is a constant independent of $T$, $L$ and $R$.
By Theorem \ref{lmr}, \eqref{f} and \eqref{g}, we have
\begin{equation}\label{cont1}
E_{p, q}(\theta, \bu) (T) \leq C_R e^{\delta T} 
\{\|(\rho, \bu_0)\|_{D_{q, p} (\BR^N)}
+ C(R, L, T)\},
\end{equation}
where $C(R, L, T) = T^{1/p} (R + L)^2 + T^{1/p} (R + L)^3 + T^{1/p'}L^2$.
Choosing $T \in (0, 1)$ so small that $C(R, L, T) \leq R$, by \eqref{cont1}, we have
\[
E_{p, q}(\theta, \bu)(T) \leq 2C_R e^{\delta T} R.
\]
Choosing $T$ in such a way that $\delta T \leq 1$
in addition,
and setting $L = 2 C_R R$, we have
\begin{equation}\label{cont2}
E_{p, q}(\theta, \bu) (T) \leq L.
\end{equation}

Let $\Phi$ be a map defined by $\Phi(\omega, \bv)$, and then by \eqref{cont2} $\Phi$ is a map from $\CI_{L, T}$
into itself.
Let $(\omega_i, \bv_i) \in \CI_{L, T} (i = 1, 2)$, 
$(\theta_1 - \theta_2, \bu_1 - \bu_2)$ with $(\theta_i, \bu_i) = \Phi (\omega_i, \bv_i)$ satisfies
\eqref{nsk3} with zero initial data.
By Theorem \ref{lmr},
we have
\[
E_{p, q}(\theta_1 - \theta_2, \bu_1 - \bu_2) (T) 
\leq C e^{\delta T} (L + L^2) (T^{1/p} + T^{1/p'})
E_{p, q}(\omega_1 - \omega_2, \bv_1 - \bv_2) (T).
\]
Choosing $T$ so small that $C e^{\delta T} (L + L^2) (T^{1/p} + T^{1/p'}) \leq 1/2$, 
$\Phi$ is contraction on $\CI_{L, T}$, so that by the Banach contraction mapping theorem,
there exists a unique fixed point $(\theta, \bu) \in \CI_{L, T}$ such that $(\theta, \bu) = \Phi(\theta, \bu)$,
which uniquely solves \eqref{nsk2}.
This completes the proof of Theorem \ref{local}.


\section{Global well-posedness for \eqref{nsk} with small initial data}
In this section, we show the global well-posedness for \eqref{nsk}, 
that is, we prove Theorem \ref{global}. 
Setting $\rho = \rho_* + \theta$, $\alpha_* = \mu_*/ \rho_*$, $\beta_* = \nu_*/ \rho_*$ and $\gamma_* = P'(\rho_*) / \rho_*$,
we write \eqref{nsk} as follows:
\begin{equation}\label{nsk4}\left\{
\begin{aligned}
&\pd_t \theta + \rho_* \dv \bu = f (\theta, \bu) & \quad&\text{in $\R^N$ for $t \in (0, T)$, } \\
&\pd_t \bu - \alpha_* \Delta \bu -\beta_* \nabla \dv \bu - \kappa_* \nabla \Delta \theta 
+ \gamma_* \nabla \theta 
= \bg (\theta, \bu) & \quad&\text{in $\R^N$ for $t \in (0, T)$,} \\
&(\theta, \bu)|_{t=0} = (\rho_0, \bu_0)& \quad&\text{in $\R^N$},
\end{aligned}\right.
\end {equation}
where
\begin{align*}
f (\theta, \bu) = & - (\theta \dv \bu + \bu \cdot \nabla \theta),\\
\bg (\theta, \bu) = & - \bu \cdot \nabla \bu + \left(\frac{1}{\rho_*+\theta}-\frac{1}{\rho_*}\right) \DV \bS 
+ \frac{\kappa_*}{\rho_* + \theta} \left(\nabla \theta \Delta \theta + \frac{1}{2} \DV |\nabla \theta|^2 - \DV (\nabla \theta \otimes \nabla \theta) \right)\\
&-\left( \frac{P'(\rho_*)}{\rho_* + \theta} - \frac{P'(\rho_*)}{\rho_*} \right)\nabla \theta - \frac{P'(\rho_* + \theta) - P'(\rho_*)}{\rho_* + \theta}\nabla \theta.
\end{align*}
To prove Theorem \ref{global}, the key issue is decay properties of solutions,
and so we start with the following subsection.

\subsection{Decay property of solutions to the linearized problem}
In this subsection, we consider the following linearized problem:
\begin{equation}\label{l1}
\begin{cases*}
&\pd_t \theta + \rho_* \dv \bu = 0 & \quad\text{in $\R^N$ for $t > 0$},  \\
&\pd_t \bu - \alpha_* \Delta \bu -\beta_* \nabla \dv \bu - \kappa_* \nabla \Delta \theta 
+ \gamma_* \nabla \theta
= 0 & \quad\text{in $\R^N$ for $t > 0$}, \\
&(\theta, \bu)|_{t=0} = (f, \bg)& \quad\text{in $\R^N$}.
\end{cases*}
\end {equation}
Then, by taking Fourier transform of \eqref{l1} and solving the ordinary differential equation with respect to $t$, we have
\begin{equation}\label{s}
\begin{aligned}
S_1(t)(f, \bg)
&:= \theta
= - \CF^{-1}_\xi
\left[ \frac{\lambda_- e^{\lambda_+ t} - \lambda_+ e^{\lambda_- t}}
{\lambda_+ - \lambda_-} \hat f \right]
- \sum^N_{k = 1} \CF^{-1}_\xi \left[ \rho_* 
\frac{e^{\lambda_+ t} - e^{\lambda_- t}}
{\lambda_+ - \lambda_-} i \xi_k \hat g_k \right],\\
S_2(t)(f, \bg)
&:= \bu
= \CF^{-1}_\xi [e^{-\alpha_* |\xi|^2 t}\hat \bg]
- \sum^N_{k = 1} 
\CF^{-1}_\xi \left[e^{-\alpha_* |\xi|^2 t} \frac{\xi \xi_k}
{|\xi|^2} \hat g_k\right]
- \CF^{-1}_\xi \left[i (\gamma_* + \kappa_* |\xi|^2)
\frac{e^{\lambda_+ t} - e^{\lambda_- t}}
{\lambda_+ - \lambda_-} \xi \hat f \right]\\
&- \sum^N_{k = 1} 
\CF^{-1}_\xi
\left[ \frac{\{(\alpha_* + \beta_*) |\xi|^2 + \lambda_-\} 
e^{\lambda_+ t} 
- \{(\alpha_* + \beta_*) |\xi|^2 + \lambda_+\}
e^{\lambda_- t}}{|\xi|^2 (\lambda_+ - \lambda_-)} 
\xi \xi_k \hat g_k \right],
\end{aligned}
\end{equation}
where 
\[
\lambda_{\pm} 
= -\frac{\alpha_* + \beta_*}{2} |\xi|^2 
\pm \sqrt{ \left(\frac{(\alpha_* + \beta_*)^2}{4} 
- \rho_* \kappa_*\right) |\xi|^4 
- \rho_* \gamma_* |\xi|^2}.
\]
To show decay estimates of $\theta$ and $\bu$, 
we use the following expansion formulae:

\begin{equation}\label{lambda}
\begin{aligned}
\lambda_\pm &= -\frac{\alpha_* + \beta_*}{2}
|\xi|^2 \pm i\sqrt{\rho_* \gamma_*}|\xi| + i O(|\xi|^2) 
\quad \text{as} \ |\xi|\to 0,\\
\lambda_\pm &= 
\begin{cases*}
&- \displaystyle \frac{\alpha_* + \beta_*}{2}
|\xi|^2 \pm \sqrt{\delta_*}|\xi|^2 + O(1)  
& \delta_* > 0,\\
&- \displaystyle \frac{\alpha_* + \beta_*}{2}
|\xi|^2 \pm i \sqrt{|\delta_*|}|\xi|^2 + O(1) 
& \delta_* < 0, 
\quad \text{as} \ |\xi|\to \infty,
\end{cases*}
\end{aligned}
\end{equation}
where $\delta_* = (\alpha_* + \beta_*)^2/4 - \rho_* \kappa_*$.

\begin{thm}\label{semi}
Let $S_i(t)$ $(i=1,2)$ be the solution operators of \eqref{l1}
given \eqref{s} and let 
$S(t)(f, \bg) = (S_1(t)(f, \bg), S_2(t)(f, \bg))$.
Then, $S(t)$ has the following decay property
\begin{equation}\label{esemi}
\|\pd^j_x S(t) (f, \bg)\|_{W^{1, 0}_p(\R^N)}
\leq C
t^{-\frac{N}{2} (\frac{1}{q} - \frac{1}{p}) - \frac{j}{2}}
\|(f, \bg)\|_{W^{1, 0}_q(\R^N)}
\end{equation}
with $j \in \BN_0$ and 
some constant $C$ depending on 
$j$, $p$, $q$, $\alpha_*$, $\beta_*$ and $\gamma_*$,
where
\begin{equation}\label{pqcondi}
\begin{cases*}
&1 < q \leq p \leq \infty \text{ and } 
(p, q) \neq (\infty, \infty) 
&\text{ if } 0 < t \leq 1,\\
& 1< q \leq 2 \leq p \leq \infty \text{ and } 
(p, q) \neq (\infty, \infty) 
&\text{ if } t \geq 1.
\end{cases*}
\end{equation}
\end{thm}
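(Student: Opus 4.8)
The plan is to work directly from the explicit Fourier--multiplier representation \eqref{s}, after splitting the frequency variable by a cut--off $\varphi\in C^\infty_0(\R^N)$ with $\varphi\equiv1$ on $\{|\xi|\le r_0\}$, where $r_0$ is chosen small enough that the expansions \eqref{lambda} hold on ${\rm supp}\,\varphi$; thus $S(t)=S(t)\varphi(D)+S(t)(1-\varphi)(D)$. Before estimating I would rewrite the multipliers in \eqref{s} so as to eliminate the quotients by $\lambda_+-\lambda_-$, using $\frac{e^{\lambda_+t}-e^{\lambda_-t}}{\lambda_+-\lambda_-}=\int_0^1 e^{(\theta\lambda_++(1-\theta)\lambda_-)t}\,d\theta$ together with $\lambda_++\lambda_-=-(\alpha_*+\beta_*)|\xi|^2$ --- so that, for instance, the last symbol in the $\bu$--formula becomes $-\frac{\xi\xi_k}{|\xi|^2}\int_0^1(\theta\lambda_++(1-\theta)\lambda_-)\,e^{(\theta\lambda_++(1-\theta)\lambda_-)t}\,d\theta$. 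After this, every scalar factor in \eqref{s} is a combination, smooth away from $\xi=0$, of $e^{\lambda_\pm t}$, of the two entire functions just displayed, of $\xi$, $|\xi|^2$, and of the Riesz symbols $\xi_j\xi_k/|\xi|^2$. Condition \eqref{condi} (i.e.\ $\delta_*\ne0$) removes the apparent singularity on $\{\lambda_+=\lambda_-\}$, and, with $\rho_*\gamma_*>0$ and $\rho_*\kappa_*>0$, also gives ${\rm Re}\,\lambda_\pm\le-c|\xi|^2$ for all $\xi$ (for $\delta_*>0$ using $\sqrt{\delta_*}<(\alpha_*+\beta_*)/2$), hence $|e^{\lambda_\pm t}|\le e^{-c|\xi|^2t}$.

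On ${\rm supp}\,\varphi$ write $\lambda_\pm=-A(\xi)\pm iB(\xi)$ with $A(\xi)\ge c|\xi|^2$ and $B(\xi)=\sqrt{\rho_*\gamma_*}\,|\xi|+O(|\xi|^2)$; separating real and imaginary parts presents each component of $\pd^j_x S(t)\varphi(D)(f,\bg)$ as a finite sum of terms
\[
\CF^{-1}_\xi\bigl[\varphi(\xi)\,e^{-A(\xi)t}\,r(B(\xi)t)\,|\xi|^{a}\,m(\xi)\,\hat v(\xi)\bigr],\qquad \hat v\in\{\hat f,\hat{\bg}\},\quad a\ge0,
\]
where $r(s)\in\{\cos s,\ s^{-1}\sin s\}$, $m$ is a Mikhlin symbol (possibly $\xi_j\xi_k/|\xi|^2$), and $a\ge0$ because the $i\xi_k$ in front of $\hat g_k$, the $\xi$ in front of $\hat f$, and the $\pd^j_x$ together supply the powers of $|\xi|$ compensating $B(\xi)^{-1}\sim|\xi|^{-1}$. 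For $0<t\le1$ the phase $B(\xi)t$ is bounded on ${\rm supp}\,\varphi$, so after peeling off $m$ as a bounded operator on $L_q$ ($1<q<\infty$, which is exactly why $(p,q)\ne(\infty,\infty)$ is needed) the remaining symbol is $C^\infty_0$ with kernel in every $L_r(\R^N)$; Young's inequality then gives $C\|(f,\bg)\|_{W^{1,0}_q}\le Ct^{-\frac N2(\frac1q-\frac1p)-\frac j2}\|(f,\bg)\|_{W^{1,0}_q}$, the last step because the exponent is nonnegative. For $t\ge1$ the factor carries the oscillation $e^{\pm iB(\xi)t}$ of an acoustic wave and its convolution kernel is no longer uniformly in $L^1$, so Young's inequality is replaced by a chain of Hausdorff--Young and H\"older estimates $L_q\to L_{q'}\ (q\le2)\to L_{p'}\to L_p\ (p\ge2)$; the decay $t^{-\frac N2(\frac1q-\frac1p)-\frac a2}$ then comes from $\|\varphi\,e^{-A(\xi)t}|\xi|^{a}\|_{L_r}$ with $\frac1r=\frac1q-\frac1p$, via the rescaling $\xi=\eta/\sqrt t$ and $A(\xi)\ge c|\xi|^2$, the oscillation $e^{\pm iB(\xi)t}$ being harmless under absolute values; this is precisely where the restriction $1<q\le2\le p\le\infty$ is used for $t\ge1$.

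For the high--frequency part, \eqref{lambda} gives ${\rm Re}\,\lambda_\pm\le-c|\xi|^2$ on $\{|\xi|\ge r_0\}$ as well, and $|e^{-\alpha_*|\xi|^2t}|\le e^{-c|\xi|^2t}$, so after the rewriting each component of $\pd^j_x S(t)(1-\varphi)(D)(f,\bg)$ is a sum of terms $\CF^{-1}_\xi[(1-\varphi)\,e^{-c|\xi|^2t}\,|\xi|^{b}\,m(\xi)\,\hat v]$, $m$ Mikhlin, with $b$ precisely the power making the derivative count agree with the claimed rate (the smoothing built into the coupling --- e.g.\ the $|\xi|^{-1}$ in the $\theta$--from--$\bg$ symbol --- absorbs the extra derivative of the $W^1_p$--norm). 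Peeling off $m$ as before: for $0<t\le1$ one uses $|\xi|^{b}e^{-c|\xi|^2t}\le Ct^{-b/2}e^{-\frac c2|\xi|^2t}$ and, $b$ being minimal admissible, Young's inequality yields $Ct^{-\frac N2(\frac1q-\frac1p)-\frac j2}$; for $t\ge1$, writing $e^{-c|\xi|^2t}\le e^{-cr_0^2t/2}e^{-\frac c2|\xi|^2t}$ produces a fixed Schwartz kernel times $e^{-c't}$, so this contribution decays exponentially and a fortiori meets the polynomial bound (here $q\le2\le p$ is unnecessary --- it was imposed only by the low--frequency step). Adding the low-- and high--frequency contributions and summing the finitely many terms yields \eqref{esemi}.

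\medskip
\noindent\textbf{Main obstacle.} The decay comes softly from the viscous Gaussian $e^{-A(\xi)t}$; the real work is the symbol calculus. One must check that, after the rewriting above, every scalar factor obeys the Mikhlin bounds $|\pd^\alpha_\xi m(\xi)|\le C_\alpha|\xi|^{-|\alpha|}$ \emph{uniformly in $t>0$}: each $\pd_{\xi_j}$ hitting $e^{\lambda_\pm t}$ or $\int_0^1 e^{(\theta\lambda_++(1-\theta)\lambda_-)t}\,d\theta$ brings down $t\,\pd_{\xi_j}\lambda_\pm=O(t|\xi|)$ near $\xi=0$ (and $O(t|\xi|^2)$ away from it), which must be reabsorbed through $t^{k/2}|\xi|^{k}e^{-c|\xi|^2t}\le C$; near $\xi=0$ one must also track the competition between the $|\xi|^{-1}$ from $(\lambda_+-\lambda_-)^{-1}$, the bound $|\sin B(\xi)t|/B(\xi)\le\min(t,\,c|\xi|^{-1})$, and the numerator powers of $|\xi|$, to verify that every term has nonnegative homogeneity so that the $\pd^j_x$ and the $W^1_p$--derivative land on the right side of the count. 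The more delicate analytic point is the sharp $L_q$--$L_p$ estimate for the low--frequency acoustic part when $t\ge1$: obtaining the full rate $t^{-\frac N2(\frac1q-\frac1p)}$ rather than one with a derivative loss relies on the viscous damping localizing the relevant frequencies to $|\xi|\sim t^{-1/2}$, and it is this that forces the restriction $1<q\le2\le p\le\infty$ in that regime.
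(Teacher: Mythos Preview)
Your argument is correct and follows the same overall architecture as the paper's proof: a smooth frequency cut--off splits $S(t)$ into a low--frequency acoustic part and a high--frequency parabolic part, and each is handled by Fourier--multiplier/kernel estimates exploiting ${\rm Re}\,\lambda_\pm\le -c|\xi|^2$. The differences are only in execution. For the low--frequency piece the paper simply invokes \cite[Theorem~3.1]{KS}, whereas you carry out the Hausdorff--Young/H\"older chain $L_q\to L_{q'}\to L_{p'}\to L_p$ explicitly; this is in fact what lies behind the Kobayashi--Shibata argument, so your route is the same one unpacked. For the high--frequency piece the paper writes the symbols as $e^{\lambda_\pm t}h_j(\xi)$ with $|\pd^\alpha_\xi h_j|\le C|\xi|^{j-|\alpha|}$, applies the Mikhlin multiplier theorem on $L_q$, and then reaches $L_p$ via Sobolev embedding (trading the extra derivatives available in the symbol for the integrability gain); your version, peeling off a Mikhlin factor and then using Young with the Gaussian kernel, is an equivalent bookkeeping. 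Your integral representation $\frac{e^{\lambda_+t}-e^{\lambda_-t}}{\lambda_+-\lambda_-}=\int_0^1 e^{(\theta\lambda_++(1-\theta)\lambda_-)t}\,d\theta$ is a clean way to desingularize, but note that the paper does not need it: the standing hypothesis $\delta_*\neq0$ in \eqref{condi} keeps $\lambda_+-\lambda_-$ bounded away from zero for $|\xi|$ large, and near $\xi=0$ the expansion \eqref{lambda} gives $\lambda_+-\lambda_-\sim 2i\sqrt{\rho_*\gamma_*}\,|\xi|$, which is compensated by the numerator in every term of \eqref{s}.
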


\begin{proof} To prove \eqref{esemi}, we  divide the solution formula
into the low frequency part and high frequency part. For this purpose, 
we introduce a cut off function $\varphi(\xi) \in C^\infty(\BR^N)$
 which equals $1$ for $|\xi| \leq \epsilon$ and $0$ for $|\xi| \geq 2\epsilon$,
 where $\epsilon$ is a suitably small positive constant. Let $\Phi_0$ and $\Phi_\infty$ be 
 operators acting on $(f, \bg) \in W^{1,0}_q(\BR^N)$ defined by setting
 $$\Phi_0(f, \bg) = \CF^{-1}_\xi[\varphi(\xi)(\hat f(\xi), \hat\bg(\xi))],
 \quad \Phi_\infty(f, \bg) = \CF^{-1}_\xi[(1-\varphi(\xi))(\hat f(\xi), \hat\bg(\xi))].
 $$
Let $S^0_i(t)(f, \bg) = S_i(t)\Phi_0(f, \bg)$ and $S^\infty_i(t)(f, \bg) = S_i(t)\Phi_\infty(f, \bg)$.
We first consider the low frequency part. Namely, we estimate 
$S^0(f, \bg) = (S^0_1(t)(f, \bg), S^0_2(t)(f, \bg))$. 
If $(p, q)$ satisfies the conditions \eqref{pqcondi}, 
employing the same argument as in the proof of Theorem 3.1 
in
\cite{KS},
we have
$$\|\pd^j_x S^0(t) (f, \bg)\|_{W^{1, 0}_p(\R^N)}
\leq C
t^{-\frac{N}{2} (\frac{1}{q} - \frac{1}{p}) - \frac{j}{2}}
\|(f, \bg)\|_{W^{1, 0}_q(\R^N)}
$$
with $j \in \BN_0$.

We next consider the high frequency part, that is we estimate $S^\infty(t)(f, \bg) = 
(S^\infty_1(t)(f, g), S^\infty_2(t)(f, \bg))$. 
By the solution formulas \eqref{s}, 
we have 
\begin{align*}
S^\infty_1(t) (f, \bg) 
&= \CF^{-1}_\xi [e^{\lambda_\pm (\xi) t} h (\xi) (\hat f, \hat \bg)](x),\\
(\pd^{j+1}_x S^\infty_1(t) (f, \bg), \pd^j_x S^\infty_2(t) (f, \bg)) 
&= \CF^{-1}_\xi [e^{\lambda_\pm (\xi) t} h_j (\xi) (i \xi \hat f, \hat \bg)](x),
\end{align*}
where $h$ and $h_j$ satisfy the conditions: 
\begin{equation}\label{gj}
|\pd_\xi^\alpha h (\xi)| 
\leq C 
|\xi|^{-|\alpha|},
\enskip
|\pd_\xi^\alpha h_j (\xi)| 
\leq C 
|\xi|^{j-|\alpha|}
\end{equation}
for $j \in \BN_0$ and any multi-index $\alpha \in \BN^N_0$
with some constant $C$ depending on $\alpha, \alpha_*, \beta_*$ and $\gamma_*$. 
Using the estimate
$(|\xi|t^{1/2})^j e^{-C_* |\xi|^2 t} \leq C e^{-(C_*/2) |\xi|^2 t}$
and the following Bell's formula for the derivatives of the composite functions:
\[
\pd^\alpha_\xi f (g (\xi))
= \sum^{|\alpha|}_{k = 1} f^{(k)} (g(\xi)) 
\sum_{\alpha = \alpha_1 + \cdots + \alpha_k \atop |\alpha_i| \geq 1}
\Gamma^{\alpha}_{\alpha_1, \ldots, \alpha_k}
(\pd_{\xi}^{\alpha_1} g(\xi)) \cdots
(\pd_{\xi}^{\alpha_k} g(\xi))
\]
with $f^{(k)}(t) = d^k f(t)/dt^{k}$ and suitable coefficients
 $\Gamma^{\alpha}_{\alpha_1, \ldots, \alpha_k}$, 
we see that
\begin{equation}\label{e}
|\pd_\xi^\alpha e^{\lambda_\pm (\xi) t}| \leq C e^{-C_* |\xi|^2 t} |\xi|^{-|\alpha|}
\end{equation}
with some constant $C_*$ depending on $\alpha_*$, $\beta_*$ and $\gamma_*$.
By \eqref{gj} and \eqref{e}, we have
\[
|\pd_\xi^\alpha e^{\lambda_\pm (\xi) t} h (\xi)|
\leq C e^{-(C_*/2) |\xi|^2 t} |\xi|^{-|\alpha|},
\enskip
|\pd_\xi^\alpha e^{\lambda_\pm (\xi) t} h_j (\xi)|
\leq Ct^{-j/2} e^{-(C_*/2) |\xi|^2 t} |\xi|^{-|\alpha|}.
\]
Applying Fourier multiplier theorem, we have
\begin{align*}
\|S^\infty_1(t) (f, \bg)\|_{L_q(\R^N)} 
&\leq C_q e^{-c t} \|(f, \bg)\|_{L_q(\R^N)},\\
\|(\pd^{j+1}_x S^\infty_1(t) (f, \bg), \pd^j_x S^\infty_2(t) (f, \bg)) 
\|_{L_q(\R^N)} 
&\leq C_q t^{-j/2}e^{-c t} \|(f, \bg)\|_{W^{1, 0}_q(\R^N)}
\end{align*}
with some positive constant $c$ when $1<q<\infty$, 
which together with Sobolev's imbedding theorem implies 
\begin{align*}
\|S^\infty_1(t) (f, \bg)\|_{L_p(\R^N)} 
&\leq C_q t^{-\frac{N}{2} (\frac{1}{q} - \frac{1}{p})} 
\|(f, \bg)\|_{L_q(\R^N)},\\
\|(\pd^{j+1}_x S^\infty_1(t) (f, \bg), \pd^j_x S^\infty_2(t) (f, \bg)) 
\|_{L_p(\R^N)} 
&\leq C_q t^{-\frac{N}{2} (\frac{1}{q} - \frac{1}{p}) - \frac{j}{2}}
\|(f, \bg)\|_{W^{1, 0}_q(\R^N)}
\end{align*}
when $1 <q \leq p \leq \infty$ and $(p, q) \neq (\infty, \infty)$, and
therefore $S^\infty (f, \bg)$ satisfies \eqref{esemi}.
This completes the proof of Theorem \ref{semi}.
\end{proof}

\subsection{A proof of Theorem \ref{global}}
We prove Theorem \ref{global} by the Banach fixed point argument.
Let $p$, $q_1$ and $q_2$ be exponents given in Theorem \ref{global}.
Let $\epsilon$ be a small positive number 
and let $\CN (\theta, \bu)$ be the norm defined in \eqref{N}.
We define the underlying space $\CI_\epsilon$ by setting
\begin{equation}\label{space}
\CI_\epsilon
= \{ (\theta, \bu) \in X_{p, \frac{q_1}{2}, \infty} \cap X_{p, q_2, \infty}
\mid (\theta, \bu)|_{t=0} = (\rho_0, \bu_0), 
\enskip \CN(\theta, \bu) (\infty) \leq L \epsilon
\}.
\end{equation}
with some constant $L$ which will be determined later. 
Given $(\theta, \bu) \in \CI_\epsilon$, 
let $(\omega, \bw)$ be a solution to the equation:
\begin{equation*}
\begin{cases*}
&\pd_t \omega + \rho_* \dv \bw = f (\theta, \bu) & \quad\text{in $\R^N$ for $t > 0$}, \\
&\rho_* \pd_t \bw - \mu_* \Delta \bw -\nu_* \nabla \dv \bw + P'(\rho_*) \nabla \omega 
- \kappa_* \rho_*  \nabla \Delta \omega 
= \bg (\theta, \bu)  & \quad\text{in $\R^N$ for $t > 0$}, \\
&(\omega, \bw)|_{t=0} = (\rho_0, \bu_0)& \quad\text{in $\R^N$},
\end{cases*}
\end {equation*}
where
\begin{align*}
f (\theta, \bu) = & - \theta \dv \bu - \bu \cdot \nabla \theta,\\
\bg (\theta, \bu)  = & - \theta \pd_t \bu - (\rho_* + \theta) \bu \cdot \nabla \bu 
- \nabla \left(\int^1_0 P''(\rho_* + \tau \theta)(1-\tau) \,d\tau \theta^2 \right)\\
&+\kappa_* \dv (\theta \nabla \theta) 
+ \kappa_* \DV \left( \frac{1}{2} |\nabla \theta|^2 \bI - \nabla \theta \otimes \nabla \theta \right).
\end{align*}
We shall prove 
\begin{equation}\label{extend}
\CN(\omega, \bw) (t) \leq C(\CI + \CN(\theta, \bu) (t)^2),
\end{equation}
where
$\CI$ is defined in Theorem \ref{global}.

Since $(\theta, \bu) \in X_{p, \frac{q_1}{2}, \infty} \cap X_{p, q_2, \infty}$, we have
\begin{equation}\label{infty}
\frac{\rho_*}{4} \leq
\rho_* + \theta(t, x) \leq 4 \rho_*.
\end{equation}

We now  estimate $(\omega, \bw)$ in the case that $t>2$. 
By Duhamel's principle, we  write $(\omega, \bw)$ as
\begin{equation}\label{duhamel}
(\omega, \bw) = S(t) (\rho_0, \bu_0) + \int^t_0 S(t - s) (f(s), \bg(s))\, ds.
\end{equation}
Since $S(t) (\rho_0, \bu_0)$ can be estimated directly by Theorem \ref{semi},
we only estimate the second term, below.
We divide the second term into three parts as follows.
\begin{align}
\int^t_0 \|\pd_x ^j S(t - s) (f(s), \bg(s))\|_{X}\, ds = \left( \int^{t/2}_0 + \int^{t-1}_{t/2} + \int^t_{t-1}\right) \|\pd_x^j S(t - s)(f(s), \bg(s))\|_{X}\, ds
=: \sum^3_{ k= 1}I_X^k
\end{align}
for $t > 2$, where $X = L_\infty$, $L_{q_1}$ and $L_{q_2}$.

\noindent
\underline{\bf Estimates in $L_\infty$.}

By \eqref{infty} and Theorem \ref{semi} with 
$(p, q) = (\infty, q_1/2)$
and H\"older's inequality
under the condition $q_1/2 \leq 2$, 
we have
\begin{align}\label{d1}
I_\infty^1 &\leq C \int^{t/2}_0 (t - s)^{-\frac{N}{q_1} - \frac{j}{2}} 
\|(f, \bg)\|_{W^{1, 0}_{q_1/2}(\BR^N)} \,ds
\leq C\int^{t/2}_0 (t - s)^{-\frac{N}{q_1} - \frac{j}{2}} (A_1 + B_1) \,ds, 
\end{align}
where
\begin{align*}
A_1 &=(\|(\theta, \bu)\|_{L_{q_1}(\R^N)}
+\|\nabla \theta\|_{L_{q_1}(\R^N)})
\|(\nabla \theta, \nabla \bu)\|_{L_{q_1}(\R^N)}, \\
B_1&=\|\theta\|_{L_{q_1}(\R^N)}(\|\pd_s \bu\|_{L_{q_1}(\R^N)}
+\|(\nabla^2 \theta, \nabla^2 \bu)\|_{L_{q_1}(\R^N)})
+(\|\bu\|_{L_{q_1}(\R^N)}+\|\nabla \theta\|_{L_{q_1}(\R^N)})
\|\nabla^2 \theta\|_{L_{q_1}(\R^N)}.
\end{align*}
Since $A_1$ has only lower order derivatives, we have
\begin{align}\label{A1}
A_1 &\leq <s>^{-(\frac{N}{q_1}+\frac{1}{2})}
[(\theta, \bu)]_{q_1, \frac{N}{2q_1}, t}
[(\nabla \theta, \nabla \bu)]_{q_1, \frac{N}{2q_1}+\frac{1}{2}, t}
+ <s>^{-(\frac{N}{q_1}+1)}[\nabla \theta]_{q_1, \frac{N}{2q_1}+\frac{1}{2}, t}
[(\nabla \theta, \nabla \bu)]_{q_1, \frac{N}{2q_1}+\frac{1}{2}, t} \nonumber \\
& \leq
<s>^{-(\frac{N}{q_1}+\frac{1}{2})}([(\theta, \bu)]_{q_1, \frac{N}{2q_1}, t}
+[\nabla \theta]_{q_1, \frac{N}{2q_1}+\frac{1}{2}, t})
[(\nabla \theta, \nabla \bu)]_{q_1, \frac{N}{2q_1}+\frac{1}{2}, t}.
\end{align}
On the other hand, since
$B_1$ has higher order derivatives, we have 
\begin{align}\label{B1}
B_1 &\leq
<s>^{-(\frac{N}{q_1}-\tau)}[\theta]_{q_1, \frac{N}{2q_1}, t}
<s>^{\frac{N}{2q_1}-\tau}
(\|\pd_s \bu\|_{L_{q_1}(\R^N)} + \|(\theta, \bu)\|_{W^2_{q_1}(\R^N)}) \nonumber \\
& \enskip
+
<s>^{-(\frac{N}{q_1}-\tau)}[\bu]_{q_1, \frac{N}{2q_1}, t}
<s>^{\frac{N}{2q_1}-\tau}
\|\theta\|_{W^2_{q_1}(\R^N)} \nonumber \\
& \enskip
+
<s>^{-(\frac{N}{q_1}+\frac{1}{2}-\tau)}
[\nabla \theta]_{q_1, \frac{N}{2q_1}+\frac{1}{2}, t}
<s>^{\frac{N}{2q_1}-\tau}
\|\theta\|_{W^2_{q_1}(\R^N)} \nonumber \\
& \leq
<s>^{-(\frac{N}{q_1}-\tau)}[\theta]_{q_1, \frac{N}{2q_1}, t}
<s>^{\frac{N}{2q_1}-\tau}
(\|\pd_s \bu\|_{L_{q_1}(\R^N)} + \|(\theta, \bu)\|_{W^2_{q_1}(\R^N)}) \nonumber \\
& \enskip
+
<s>^{-(\frac{N}{q_1}-\tau)}([\bu]_{q_1, \frac{N}{2q_1}, t}+
[\nabla \theta]_{q_1, \frac{N}{2q_1}+\frac{1}{2}, t})
<s>^{\frac{N}{2q_1}-\tau}\|\theta\|_{W^2_{q_1}(\R^N)}.
\end{align}
Since $1-(N/q_1 + 1/2) < 0$ and $1 - (N/q_1 - \tau)p' < 0$
as follows from $q_1 < N$
and $\tau < N/q_2 +1/p$,
by \eqref{d1}, \eqref{A1} and \eqref{B1}, we have
\begin{align}\label{infty1}
I_\infty^1
 &\leq C t^{-\frac{N}{q_1} - \frac{j}{2}} 
 \int^{t/2}_0 <s>^{-(\frac{N}{q_1}+\frac{1}{2})}\,ds 
 ([(\theta, \bu)]_{q_1, \frac{N}{2q_1}, t}
+[\nabla \theta]_{q_1, \frac{N}{2q_1}+\frac{1}{2}, t})
[(\nabla \theta, \nabla \bu)]_{q_1, \frac{N}{2q_1}+\frac{1}{2}, t} \nonumber\\ 
&\enskip + C t^{-\frac{N}{q_1} - \frac{j}{2}} \left(\int^{t/2}_0 
<s>^{-(\frac{N}{q_1} -\tau)p'}\,ds\right)^{1/p'} 
[\theta]_{q_1, \frac{N}{2q_1}, t}
\{
\|<s>^{\frac{N}{2q_1}-\tau}
\pd_s \bu\|_{L_p((0, t), L_{q_1}(\R^N))} 
\nonumber \\
& \enskip
+ \|<s>^{\frac{N}{2q_1}-\tau} (\theta, \bu)\|_{L_p((0, t), W^2_{q_1}(\R^N))}
\} \nonumber \\
& \enskip
+
C t^{-\frac{N}{q_1} - \frac{j}{2}} \left(\int^{t/2}_0 
<s>^{-(\frac{N}{q_1} -\tau)p'}\,ds\right)^{1/p'} 
([\bu]_{q_1, \frac{N}{2q_1}, t}+
[\nabla \theta]_{q_1, \frac{N}{2q_1}+\frac{1}{2}, t})
\|<s>^{\frac{N}{2q_1}-\tau} \theta\|_{L_p((0, t), W^2_{q_1}(\R^N))}
\nonumber\\
&\leq C t^{-\frac{N}{q_1} - \frac{j}{2}} E_0(t),
\end{align}
where 
\begin{align*}
E_0(t) = &([(\theta, \bu)]_{q_1, \frac{N}{2q_1}, t}
+[\nabla \theta]_{q_1, \frac{N}{2q_1}+\frac{1}{2}, t})
[(\nabla \theta, \nabla \bu)]_{q_1, \frac{N}{2q_1}+\frac{1}{2}, t}\\
&+ [\theta]_{q_1, \frac{N}{2q_1}, t}
\{
\|<s>^{\frac{N}{2q_1}-\tau}
\pd_s \bu\|_{L_p((0, t), L_{q_1}(\R^N))} 
+
\|<s>^{\frac{N}{2q_1}-\tau} (\theta, \bu)\|_{L_p((0, t), W^2_{q_1}(\R^N))}\}\\
&+([\bu]_{q_1, \frac{N}{2q_1}, t}+
[\nabla \theta]_{q_1, \frac{N}{2q_1}+\frac{1}{2}, t})
\|<s>^{\frac{N}{2q_1}-\tau} \theta\|_{L_p((0, t), W^2_{q_1}(\R^N))}
.
\end{align*}
Analogously, we have
\begin{equation}\label{infty2}
I_\infty^2 \leq C t^{-\frac{N}{q_1} - \frac{j}{2}} E_0(t).
\end{equation}

We now estimate $I^3_\infty$. 
By \eqref{infty} and Theorem \ref{semi} with 
$(p, q) = (\infty, q_2)$,
we have
\begin{align}\label{d2}
I_\infty^3 &\leq C \int^t_{t-1} (t - s)^{-\frac{N}{2q_2} - \frac{j}{2}} 
\|(f, \bg)\|_{W^{1, 0}_{q_2}(\BR^N)} \,ds
\leq C\int^t_{t-1} (t - s)^{-\frac{N}{2q_2} - \frac{j}{2}} (A_2 + B_2) \,ds, 
\end{align}
where
\begin{align*}
A_2 &=(\|(\theta, \bu)\|_{L_ \infty (\R^N)}
+\|\nabla \theta\|_{L_\infty (\R^N)})
\|(\nabla \theta, \nabla \bu)\|_{L_{q_2}(\R^N)}, \\
B_2&=\|\theta\|_{L_\infty (\R^N)}(\|\pd_s \bu\|_{L_{q_2}(\R^N)}
+\|(\nabla^2 \theta, \nabla^2 \bu)\|_{L_{q_2}(\R^N)})
+(\|\bu\|_{L_\infty (\R^N)}+\|\nabla \theta\|_{L_\infty (\R^N)})
\|\nabla^2 \theta\|_{L_{q_2}(\R^N)}.
\end{align*}
satisfying
\begin{align}
A_2 &\leq <s>^{-(\frac{N}{q_1}+\frac{N}{2q_2}+\frac{3}{2})}
[(\theta, \bu)]_{\infty, \frac{N}{q_1}, t}
[(\nabla \theta, \nabla \bu)]_{q_2, \frac{N}{2q_2}+\frac{3}{2}, t}\nonumber \\
&+ <s>^{-(\frac{N}{q_1}+\frac{N}{2q_2}+2)}
[\nabla \theta]_{\infty, \frac{N}{q_1}+\frac{1}{2}, t}
[(\nabla \theta, \nabla \bu)]_{q_2, \frac{N}{2q_2}+\frac{3}{2}, t},\label{A2} \\
B_2 &\leq
<s>^{-(\frac{N}{q_1}+\frac{N}{2q_2}+1-\tau)}
[\theta]_{\infty, \frac{N}{q_1}, t}
<s>^{\frac{N}{2q_2}+1-\tau}
(\|\pd_s \bu\|_{L_{q_2}(\R^N)} + \|(\theta, \bu)\|_{W^2_{q_2}(\R^N)}) \nonumber \\
& \enskip
+
<s>^{-(\frac{N}{q_1}+\frac{N}{2q_2}+1-\tau)}
[\bu]_{\infty, \frac{N}{q_1}, t}
<s>^{\frac{N}{2q_2}+1-\tau}
\|\theta\|_{W^2_{q_2}(\R^N)}
\nonumber \\ 
&+
<s>^{-(\frac{N}{q_1}+\frac{N}{2q_2}+\frac{3}{2}-\tau)}
[\nabla \theta]_{\infty, \frac{N}{q_1} +\frac{1}{2}, t} 
<s>^{\frac{N}{2q_2}+1-\tau}
\|\theta\|_{W^2_{q_2}(\R^N)}.\label{B2}
\end{align}
Since $1-(N/2q_2 + j/2) > 0$, $1 - (N/2q_2 + j/2)p' > 0$,
and $N/2q_2 + 1/2 -\tau > j/2$ as follows 
from $N < q_2$, $2/p + N/q_2<1$ and
$\tau < N/q_2 + 1/p$,
by \eqref{d2}, \eqref{A2} and \eqref{B2}, we have
\begin{align}\label{infty3}
I_\infty^3
 &\leq C t^{-(\frac{N}{q_1}+\frac{N}{2q_2}+\frac{3}{2})} 
\int^t_{t-1} (t - s)^{- ( \frac{N}{2q_2} + \frac{j}{2} )}\,ds 
[(\theta, \bu)]_{\infty, \frac{N}{q_1}, t}
[(\nabla \theta, \nabla \bu)]_{q_2, \frac{N}{2q_2}+\frac{3}{2}, t}\nonumber \\
& \enskip
+
C t^{-(\frac{N}{q_1}+\frac{N}{2q_2}+2)} 
\int^t_{t-1} (t - s)^{- ( \frac{N}{2q_2} + \frac{j}{2} )}\,ds 
[\nabla \theta]_{\infty, \frac{N}{q_1}+\frac{1}{2}, t}
[(\nabla \theta, \nabla \bu)]_{q_2, \frac{N}{2q_2}+\frac{3}{2}, t}\nonumber \\
& \enskip
+
Ct^{-(\frac{N}{q_1}+\frac{N}{2q_2}+1 - \tau)}
\left(\int^t_{t-1} (t - s)^{- ( \frac{N}{2q_2} + \frac{j}{2} )p'}\,ds\right)^{1/p'} 
[\theta]_{\infty, \frac{N}{q_1}, t}
\{\|<s>^{\frac{N}{2q_2}+1-\tau}
\pd_s \bu\|_{L_p((0, t), L_{q_2}(\R^N))} \nonumber\\
&\enskip+ \|<s>^{\frac{N}{2q_2}+1-\tau}(\theta, \bu)\|_{L_p((0, t), W^2_{q_2}(\R^N))}\}
\nonumber\\
& \enskip
+
Ct^{-(\frac{N}{q_1}+\frac{N}{2q_2}+1-\tau)}
\left(\int^t_{t-1} (t - s)^{- ( \frac{N}{2q_2} + \frac{j}{2} )p'}\,ds\right)^{1/p'} 
[\bu]_{\infty, \frac{N}{q_1}, t}
\|<s>^{\frac{N}{2q_2}+1-\tau}
\theta\|_{L_p((0, t), W^2_{q_2}(\R^N))}\nonumber\\
& \enskip
+
Ct^{-(\frac{N}{q_1}+\frac{N}{2q_2}+\frac{3}{2}-\tau)}
\left(\int^t_{t-1} (t - s)^{- ( \frac{N}{2q_2} + \frac{j}{2} )p'}\,ds\right)^{1/p'} 
[\nabla \theta]_{\infty, \frac{N}{q_1}+\frac{1}{2}, t}
\|<s>^{\frac{N}{2q_2}+1-\tau}
\theta\|_{L_p((0, t), W^2_{q_2}(\R^N))}
\nonumber\\
&\leq C t^{-\frac{N}{q_1} - \frac{j}{2}} E_2(t),
\end{align}
where 
\begin{align*}
E_2(t) = &\{[(\theta, \bu)]_{\infty, \frac{N}{q_1}, t}
+[\nabla \theta]_{\infty, \frac{N}{q_1}+\frac{1}{2}, t}\}
[(\nabla \theta, \nabla \bu)]_{q_2, \frac{N}{2q_2}+\frac{3}{2}, t}\\
&+ [\theta]_{\infty, \frac{N}{q_1}, t}
\{
\|<s>^{\frac{N}{2q_2}+1-\tau}
\pd_s \bu\|_{L_p((0, t), L_{q_2}(\R^N))} 
+
\|<s>^{\frac{N}{2q_2}+1-\tau} (\theta, \bu)\|_{L_p((0, t), W^2_{q_2}(\R^N))}\}\\
&+\{[\bu]_{\infty, \frac{N}{q_1}, t}+
[\nabla \theta]_{\infty, \frac{N}{q_1}+\frac{1}{2}, t}
\}
\|<s>^{\frac{N}{2q_2}+1-\tau} \theta\|_{L_p((0, t), W^2_{q_2}(\R^N))}.
\end{align*}

By \eqref{duhamel}, \eqref{infty1}, \eqref{infty2} and \eqref{infty3}, we have
\begin{equation}\label{infty4}
\sum^1_{j = 0}[(\nabla^j \theta, \nabla^j \bu)]_{\infty, \frac{N}{q_1} + \frac{j}{2}, (2, t)} 
\leq C (\|(\rho_0, \bu_0)\|_{L_{q_1/2}(\BR^N)} + E_0 (t) + E_2 (t)).
\end{equation}

\noindent
\underline{\bf Estimates in $L_{q_1}$.}

Using \eqref{infty} and Theorem \ref{semi} with $(p, q) = (q_1, q_1/2)$ and employing 
the same calculation as in the estimate in $L_\infty$, we have
\begin{equation}\label{q11}
I_{q_1}^1 + I_{q_1}^2 \leq C t^{-\frac{N}{2q_1} - \frac{j}{2}} E_0(t).
\end{equation}
By Theorem \ref{semi} with 
$(p, q) = (q_1, q_1)$,
we have
\begin{align}\label{d3}
I_{q_1}^3 &\leq C \int^t_{t-1} (t - s)^{- \frac{j}{2}} 
\|(f, \bg)\|_{W^{1, 0}_{q_1}(\BR^N)} \,ds
\leq C\int^t_{t-1} (t - s)^{- \frac{j}{2}} (A_3 + B_3) \,ds, 
\end{align}
where
\begin{align*}
A_3 &=(\|(\theta, \bu)\|_{L_ \infty (\R^N)}
+\|\nabla \theta\|_{L_\infty (\R^N)})
\|(\nabla \theta, \nabla \bu)\|_{L_{q_1}(\R^N)}, \\
B_3&=\|\theta\|_{L_\infty (\R^N)}(\|\pd_s \bu\|_{L_{q_1}(\R^N)}
+\|(\nabla^2 \theta, \nabla^2 \bu)\|_{L_{q_1}(\R^N)})
+(\|\bu\|_{L_\infty (\R^N)}+\|\nabla \theta\|_{L_\infty (\R^N)})
\|\nabla^2 \theta\|_{L_{q_1}(\R^N)}.
\end{align*}
satisfying
\begin{align}
A_3 &\leq <s>^{-(\frac{3N}{2q_1}+\frac{1}{2})}
[(\theta, \bu)]_{\infty, \frac{N}{q_1}, t}
[(\nabla \theta, \nabla \bu)]_{q_1, \frac{N}{2q_1}+\frac{1}{2}, t}\nonumber \\
&+ <s>^{-(\frac{3N}{2q_1}+1)}
[\nabla \theta]_{\infty, \frac{N}{q_1}+\frac{1}{2}, t}
[(\nabla \theta, \nabla \bu)]_{q_1, \frac{N}{2q_1}+\frac{1}{2}, t},\label{A3} \\
B_3 &\leq
<s>^{-(\frac{3N}{2q_1}-\tau)}
[\theta]_{\infty, \frac{N}{q_1}, t}
<s>^{\frac{N}{2q_1}-\tau}
(\|\pd_s \bu\|_{L_{q_1}(\R^N)} + \|(\theta, \bu)\|_{W^2_{q_1}(\R^N)}) \nonumber \\
& \enskip
+
<s>^{-(\frac{3N}{2q_1}-\tau)}
[\bu]_{\infty, \frac{N}{q_1}, t}
<s>^{\frac{N}{2q_1}-\tau}
\|\theta\|_{W^2_{q_1}(\R^N)}
\nonumber \\ 
&+
<s>^{-(\frac{3N}{2q_1}+\frac{1}{2}-\tau)}
[\nabla \theta]_{\infty, \frac{N}{q_1} +\frac{1}{2}, t} 
<s>^{\frac{N}{2q_1}-\tau}
\|\theta\|_{W^2_{q_1}(\R^N)}.\label{B3}
\end{align}
Since $1 - (j/2)p' > 0$,
and $3N/2q_1 - \tau >N/2q_1 + j/2$ as follows 
from $p > 2$ and
$\tau < N/q_2 + 1/p$,
by \eqref{d3}, \eqref{A3} and \eqref{B3}, we have
\begin{align}\label{q13}
I_\infty^3
 &\leq C t^{-(\frac{3N}{2q_1}+\frac{1}{2})} 
\int^t_{t-1} (t - s)^{- \frac{j}{2}}\,ds 
[(\theta, \bu)]_{\infty, \frac{N}{q_1}, t}
[(\nabla \theta, \nabla \bu)]_{q_1, \frac{N}{2q_1}+\frac{1}{2}, t}\nonumber \\
& \enskip
+
C t^{-(\frac{3N}{2q_1} + 1)} 
\int^t_{t-1} (t - s)^{- \frac{j}{2}}\,ds 
[\nabla \theta]_{\infty, \frac{N}{q_1}+\frac{1}{2}, t}
[(\nabla \theta, \nabla \bu)]_{q_1, \frac{N}{2q_1}+\frac{1}{2}, t}\nonumber \\
& \enskip
+
Ct^{-(\frac{3N}{2q_1} - \tau)}
\left(\int^t_{t-1} (t - s)^{- \frac{j}{2}p'}\,ds\right)^{1/p'} 
[\theta]_{\infty, \frac{N}{q_1}, t}
\{\|<s>^{\frac{N}{2q_1}-\tau}
\pd_s \bu\|_{L_p((0, t), L_{q_1}(\R^N))} \nonumber\\
&\enskip+ \|<s>^{\frac{N}{2q_1}-\tau}(\theta, \bu)\|_{L_p((0, t), W^2_{q_1}(\R^N))}\}
\nonumber\\
& \enskip
+
Ct^{-(\frac{3N}{2q_1}-\tau)}
\left(\int^t_{t-1} (t - s)^{- \frac{j}{2} p'}\,ds\right)^{1/p'} 
[\bu]_{\infty, \frac{N}{q_1}, t}
\|<s>^{\frac{N}{2q_1}-\tau}
\theta\|_{L_p((0, t), W^2_{q_1}(\R^N))}\nonumber\\
& \enskip
+
Ct^{-(\frac{3N}{2q_1}+\frac{1}{2}-\tau)}
\left(\int^t_{t-1} (t - s)^{- \frac{j}{2} p'}\,ds\right)^{1/p'} 
[\nabla \theta]_{\infty, \frac{N}{q_1}+\frac{1}{2}, t}
\|<s>^{\frac{N}{2q_1}-\tau}
\theta\|_{L_p((0, t), W^2_{q_1}(\R^N))}
\nonumber\\
&\leq C t^{-\frac{N}{2q_1} - \frac{j}{2}} E_1(t),
\end{align}
where 
\begin{align*}
E_1(t)
&=\{[(\theta, \bu)]_{\infty, \frac{N}{q_1}, t}
+[\nabla \theta]_{\infty, \frac{N}{q_1}+\frac{1}{2}, t}\}
[(\nabla \theta, \nabla \bu)]_{q_1, \frac{N}{2q_1}+\frac{1}{2}, t}\\
&+ [\theta]_{\infty, \frac{N}{q_1}, t}
\{
\|<s>^{\frac{N}{2q_1}+\frac{1}{2}-\tau}
\pd_s \bu\|_{L_p((0, t), L_{q_1}(\R^N))} 
+
\|<s>^{\frac{N}{2q_1}+\frac{1}{2}-\tau} (\theta, \bu)\|_{L_p((0, t), W^2_{q_1}(\R^N))}\}\\
&+\{[\bu]_{\infty, \frac{N}{q_1}, t}+
[\nabla \theta]_{\infty, \frac{N}{q_1}+\frac{1}{2}, t}\}
\|<s>^{\frac{N}{2q_1}+\frac{1}{2}-\tau} \theta\|_{L_p((0, t), W^2_{q_1}(\R^N))}
.
\end{align*}
By \eqref{duhamel}, \eqref{q11} and \eqref{q13}, we have
\begin{equation}\label{q1}
\sum^1_{j = 0}[(\nabla^j \theta, \nabla^j \bu)]_{q_1, \frac{N}{2q_1} + \frac{j}{2}, (2, t)} 
\leq C (\|(\rho_0, \bu_0)\|_{L_{q_1/2}(\BR^N)} + E_0 (t) + E_1 (t)).
\end{equation}

\noindent
\underline{\bf Estimates in $L_{q_2}$.}

Using \eqref{infty} and Theorem \ref{semi} with $(p, q) = (q_2, q_1/2)$ and $(p, q) = (q_2, q_2)$, we have
\begin{equation}\label{q2}
\sum^1_{j = 0}[(\nabla^j \theta, \nabla^j \bu)]_{q_2, \frac{N}{2q_2} + 1 + \frac{j}{2}, (2, t)} 
\leq C (\|(\rho_0, \bu_0)\|_{L_{q_1/2}(\BR^N)} + E_0 (t) + E_2 (t)).
\end{equation}

In the case that $t \in (0, 2)$, we have estimates by the maximal $L_p$-$L_q$ regularity and the embedding property.
In fact, by theorem \ref{lmr} and \eqref{A2}, \eqref{B2}, \eqref{A3} and \eqref{B3}, 
we have
\begin{align}\label{mr2}
&\|(\theta, \bu)\|_{L_p((0, 2), W^{3, 2}_{q_i}(\BR^N))} 
+ \|(\pd_s \theta, \pd_s \bu)\|_{L_p((0, 2), W^{1, 0}_{q_i}(\BR^N))}\nonumber \\
&\leq C\{ \|(\rho_0, \bu_0)\|_{D_{q_i, p} (\BR^N)}+ \|(f, \bg)\|_{L_p((0, 2), W^{1, 0}_{q_i}(\BR^N))}\} \nonumber \\
&\leq C\{ \|(\rho_0, \bu_0)\|_{D_{q_i, p} (\BR^N)} + E_i (2) \}
\end{align}
for $i = 1, 2$.

By Lemma \ref{sup}, we have
\begin{align}\label{embedd}
\|(\theta, \bu)\|_{L_\infty((0, 2), W^1_\infty(\BR^N))}
&\leq C \{\|(\rho_0, \bu_0)\|_{D_{q_2, p} (\BR^N)} + E_2(2)\}.
\end{align}

Combining \eqref{infty4}, \eqref{q1}, \eqref{q2}, \eqref{mr2} and \eqref{embedd},
we have
\begin{align}\label{low}
&\sum^1_{j = 0}[(\nabla^j \theta, \nabla^j \bu)]_{\infty, \frac{N}{q_1} + \frac{j}{2}, (0, t)} 
\leq C (\CI + E_0 (t) + E_2 (t)), \nonumber \\
&\sum^1_{j = 0}[(\nabla^j \theta, \nabla^j \bu)]_{q_1, \frac{N}{2q_1} + \frac{j}{2}, (0, t)} 
\leq C (\CI + E_0 (t) + E_1 (t)),\\
&\sum^1_{j = 0}[(\nabla^j \theta, \nabla^j \bu)]_{q_2, \frac{N}{2q_2} + 1 + \frac{j}{2}, (0, t)} 
\leq C (\CI + E_0 (t) + E_2 (t)).\nonumber
\end{align}

We next consider the estimates of the weighted norm 
in the maximal $L_p$-$L_q$ regularity class
by the following time shifted equations, 
which is equivalent to the first and the second equations of \eqref{nsk4}:
\begin{align*}
&\pd_s ( <s>^{\ell_i} \theta) 
+ \delta_0 <s>^{\ell_i} \theta 
+ \rho_* \dv (<s>^{\ell_i} \bu)\\ 
&= <s>^{\ell_i} f (\theta, \bu)  
+ \delta_0 <s>^{\ell_i} \theta 
+ (\pd_s <s>^{\ell_i}) \theta \\
&\pd_s (<s>^{\ell_i} \bu) 
+ \delta_0<s>^{\ell_i} \bu 
- \alpha_* \Delta (<s>^{\ell_i} \bu) 
- \beta_* \nabla (\dv <s>^{\ell_i} \bu) \\
& \enskip + \kappa_* \nabla \Delta <s>^{\ell_i} \theta 
- \gamma_* \nabla <s>^{\ell_i} \theta \\
&= <s>^{\ell_i} \bg (\theta, \bu)  
+ \delta_0 <s>^{\ell_i} \bu 
+ (\pd_s <s>^{\ell_i})\bu,
\end {align*}
where $i=1, 2$, $\ell_1 = N/2q_1 - \tau$ and $\ell_2 = N/2q_2 + 1 - \tau$.
We  estimate the left-hand sides 
of the time shifted equations.
Since $1 - \delta p <0$, by \eqref{low}, we have
\begin{align}\label{w1}
&\|<s>^{\ell_1} (\theta, \bu)\|_{L_p((0, t), W^{1, 0}_{q_1}(\BR^N))}
\leq \left(\int^t_0 <s>^{-\delta p}\,ds \right)^{1/p} 
\left([(\theta, \bu)]_{q_1, \frac{N}{2q_1}, t} 
+ [\nabla \theta]_{q_1, \frac{N}{2q_1} 
+ \frac{1}{2}, t}\right)\nonumber \\
&\leq C(\CI + E_0 (t) + E_1 (t)),\\
&\|<s>^{\ell_2} (\theta, \bu)\|_{L_p((0, t), W^{1, 0}_{q_2}(\BR^N))}\label{w2}
\leq \left(\int^t_0 <s>^{-\delta p}\,ds \right)^{1/p} 
\left([(\theta, \bu)]_{q_2, \frac{N}{2q_2} + 1, t} 
+ [\nabla \theta]_{q_1, \frac{N}{2q_2} + \frac{3}{2}, t}\right) \nonumber \\
&\leq C(\CI + E_0 (t) + E_2 (t)).
\end{align}
Employing the same calculation
as in \eqref{w1} and \eqref{w2}, we have
\begin{equation}\label{w3}
\|(\pd_s <s>^{\ell_i})(\theta, \bu)\|_{L_p((0, t), W^{1, 0}_{q_i}(\R^N))}
\leq C(\CI + E_0 (t) + E_i (t)).
\end{equation}
By \eqref{A3} and \eqref{B3}, we have
\begin{align*}
&<s>^{\ell_1} \|(f (\theta, \bu), \bg (\theta, \bu))\|_{W^{1, 0}_{q_1}(\BR^N)}\\
&\quad \leq C\{
<s>^{-(\frac{N}{q_1} +\frac{1}{2} + \tau)}
[(\theta, \bu)]_{\infty, \frac{N}{q_1}, t}
[(\nabla \theta, \nabla \bu)]_{q_1, \frac{N}{2q_1}+\frac{1}{2}, t} \\
&\quad+ <s>^{-(\frac{N}{q_1}+1+\tau)}
[\nabla \theta]_{\infty, \frac{N}{q_1}+\frac{1}{2}, t}
[(\nabla \theta, \nabla \bu)]_{q_1, \frac{N}{2q_2}+\frac{1}{2}, t}\\
&\quad+<s>^{-\frac{N}{q_1}}
[\theta]_{\infty, \frac{N}{q_1}, t}
<s>^{\frac{N}{2q_1}-\tau}
(\|\pd_s \bu\|_{L_{q_1}(\R^N)} + \|(\theta, \bu)\|_{W^2_{q_1}(\R^N)})\\
& \quad
+
<s>^{-\frac{N}{q_1}}
[\bu]_{\infty, \frac{N}{q_1}, t}
<s>^{\frac{N}{2q_1}-\tau}
\|\theta\|_{W^2_{q_1}(\R^N)}\\
&\quad+
<s>^{-(\frac{N}{q_1}+\frac{1}{2})}
[\nabla \theta]_{\infty, \frac{N}{q_1} +\frac{1}{2}, t} 
<s>^{\frac{N}{2q_1}-\tau}
\|\theta\|_{W^2_{q_1}(\R^N)}
\},
\end{align*}
and so we have
\begin{equation}\label{w4}
\|<s>^{\ell_1} (f (\theta, \bu), \bg (\theta, \bu)) \|_{L_p((0, t), W^{1, 0}_{q_1}(\BR^N))}
\leq CE_1(t).
\end{equation}
By \eqref{A2} and \eqref{B2}, we have
\begin{align*}
&<s>^{\ell_2} \|(f (\theta, \bu), \bg (\theta, \bu))\|_{W^{1, 0}_{q_2}(\BR^N)}\\
&\quad\leq C\{
<s>^{-(\frac{N}{q_1}+\frac{1}{2}+\tau)}
[(\theta, \bu)]_{\infty, \frac{N}{q_1}, t}
[(\nabla \theta, \nabla \bu)]_{q_2, \frac{N}{2q_2}+\frac{3}{2}, t}\nonumber \\
&\quad+ <s>^{-(\frac{N}{q_1}+1+\tau)}
[\nabla \theta]_{\infty, \frac{N}{q_1}+\frac{1}{2}, t}
[(\nabla \theta, \nabla \bu)]_{q_2, \frac{N}{2q_2}+\frac{3}{2}, t}\\
&\quad+ <s>^{-\frac{N}{q_1}}
[\theta]_{\infty, \frac{N}{q_1}, t}
<s>^{\frac{N}{2q_2}+1-\tau}
(\|\pd_s \bu\|_{L_{q_2}(\R^N)} + \|(\theta, \bu)\|_{W^2_{q_2}(\R^N)}) \nonumber \\
&\quad
+
<s>^{-\frac{N}{q_1}}
[\bu]_{\infty, \frac{N}{q_1}, t}
<s>^{\frac{N}{2q_2}+1-\tau}
\|\theta\|_{W^2_{q_2}(\R^N)}
\nonumber \\ 
&\quad+
<s>^{-(\frac{N}{q_1}+\frac{1}{2})}
[\nabla \theta]_{\infty, \frac{N}{q_1} +\frac{1}{2}, t} 
<s>^{\frac{N}{2q_2}+1-\tau}
\|\theta\|_{W^2_{q_2}(\R^N)}\},
\end{align*}
and so we have
\begin{equation}\label{w5}
\|<s>^{\ell_2} (f (\theta, \bu), \bg (\theta, \bu))) \|_{L_p((0, t), W^{1, 0}_{q_2}(\BR^N))}
\leq CE_2(t).
\end{equation}
By Theorem \ref{lmr}, \eqref{w1}, \eqref{w2}, \eqref{w3}, \eqref{w4} and \eqref{w5}, we have
\begin{align}\label{high}
&\| <s>^{\ell_i} (\theta, \bu)\|_{L_p((0, t), W^{3, 2}_{q_i}(\BR^N))}
+ \| <s>^{\ell_i} (\pd_s \theta, \pd_s \bu)\|_{L_p((0, t), W^{1, 0}_{q_i}(\BR^N))}\nonumber \\
&\leq C (\|(\rho_0, \bu_0)\|_{D_{q_i, p} (\BR^N)}
+ \|<s>^{\ell_i} (f (\theta, \bu), \bg (\theta, \bu)) \|_{L_p((0, t), W^{1, 0}_{q_i}(\BR^N))} \nonumber\\
&+ \|<s>^{\ell_i} (\theta, \bu)\|_{L_p((0, t), W^{1, 0}_{q_i}(\BR^N))}
+ \|(\pd_s <s>^{\ell_i})(\theta, \bu)\|_{L_p((0, t), W^{1, 0}_{q_i}(\BR^N))} \nonumber \\
&\leq C (\CI + E_0(t) + E_i (t)).
\end{align}

Combining \eqref{low} and \eqref{high}, we have
\eqref{extend}.
Recalling that $\CI \leq \epsilon$, for $(\theta, \bu) \in \CI_\epsilon$,
we have
\begin{equation}\label{extend*}
\CN(\omega, \bw)(\infty) \leq C(\CI + \CN(\theta, \bu)(\infty)^2)
\leq C\epsilon + CL^2\epsilon^2.
\end{equation}
Choosing $\epsilon$ so small that  
$L^2 \epsilon \leq 1$ 
and setting $L = 2C$
in \eqref{extend*},
we have 
\begin{equation}\label{est:global}
\CN(\omega, \bw) \leq  L\epsilon.
\end{equation}
We define a map $\Phi$ acting on $(\theta, \bu) 
\in \CI_\epsilon$ by $\Phi(\theta, \bu) = (\omega, \bw)$, 
and then it follows from \eqref{est:global}
that $\Phi$ is the map from $\CI_\epsilon$ into itself.
Considering the difference
$\Phi(\theta_1, \bu_1) - \Phi(\theta_2, \bu_2)$
for $(\theta_i, \bu_i) \in \CI_\epsilon$ $(i = 1, 2)$,
employing the same argument as in the proof of \eqref{extend*}
and choosing $\epsilon > 0$
samller if necessary,
we see that $\Phi$ is a consraction map on $\CI_\epsilon$,
and therefore there exists a fixed point $(\omega, \bw) \in \CI_\epsilon$
which solves the equation \eqref{nsk4}.
Since the existence of solutions to \eqref{nsk4} is proved 
by the contraction mapping principle,
the uniqueness of solutions belonging to $\CI_\epsilon$ follows immediately,
which completes the proof of Theorem \ref{global}.

\end{document}